\theoremstyle{plain}
\newtheorem{theorem}{\textsc{Theorem}}
\newtheorem{lemma}{\textsc{Lemma}}
\newtheorem{proposition}{\textsc{Proposition}}
\newtheorem*{proposition*}{\textsc{Proposition}}
\newtheorem*{maintheorem}{\textsc{Main Theorem}}
\newcommand{\RR}{\mathbb{R}}		%R dei reali, usare in ambiente matematico
\newcommand*\aabbcc{\includegraphics[scale=0.1]{aabbcc}}
\newcommand*\aabccb{\includegraphics[scale=0.1]{aabccb}}
\newcommand*\aabcbc{\includegraphics[scale=0.1]{aabcbc}}
\theoremstyle{definition}
\newtheorem*{definition}{Definition}
\title{On the Blow-analytic Equivalence of Tribranched Plane Curves}
\author{Cristina \textsc{Valle}\thanks{The author is supported by a Japanese Government scholarship (Monbukagakusho).}}
\date{}
\begin{document}

\label{startpage}

\maketitle

\abstract{We prove the finiteness of the number of blow-analytic equivalence classes of embedded plane curve germs for any fixed number of branches and for any fixed value of $\mu'$ ---a combinatorial invariant coming from the dual graphs of good resolutions of embedded plane curve singularities. In order to do so, we develop the concept of standard form of a dual graph. We show that, fixed $\mu'$ in $\mathbb{N}$, there are only a finite number of standard forms, and to each one of them correspond a finite number of blow-analytic equivalence classes. In the tribranched case, we are able to give an explicit upper bound to the number of graph standard forms. For $\mu'\leq 2$, we also provide a complete list of standard forms.}

\section{Introduction}
Compared to complex algebraic geometry, real algebraic geometry has a flexible nature, thus offering many different approaches to the subject. For example, in the real case there is no analogue of Hartogs' extension theorem. This means, in particular, that given an analytic function germ $f: (\RR^2, 0) \rightarrow \RR$, we can construct an analytically distinct $g: (\RR^2, 0) \rightarrow \RR$ such that $f$ and $g$ are isomorphic on a complement of the origin.

Thus, for a qualitative study of real singularities, we have a chance to introduce a suitable equivalence relation taking into account this phenomenon. Blow-analytic equivalence, originally introduced by Kuo \cite{original}, is one possibility. 

For function germs, several classification results are known, cf. \cite{Koike} \cite{Fukui}. In this article, we shall study the classification of germs of \textit{embedded} plane curve singularities up to blow-analytic homeomorphism, instead.

The first result in this direction appeared in \cite{Kuo} in 1998 and it showed how a straight line is blow-analytically equivalent to a cusp (in fact, it is equivalent to any unibranched germ of plane curve, no matter how singular). The peculiar behaviour of this example ---known as the Kobayashi--Kuo example--- suggests that the blow-analytic equivalence of embedded curves has strong topological nature. 

Here we aim to continue and expand the blow-analytic classification of embedded plane curve singularities. Firstly, we recall the basic definitions and constructions ---which can be found in \cite{Kuo} \cite{Kob}--- along with previous results in the subject, mostly concerning the classification of unibranched and bibranched germs of plane curves.

The study of the tribranched case occupies sections four and five, where we compute an upper estimate for the number of blow-analytic equivalence classes depending on a combinatorial invariant $\mu'$ defined in section two, and then we proceed to find representatives for each class for $\mu' \leq 2$.

Our main result concerns the general $n$-branched case and is stated in section three. We show the ``local'' finiteness of the blow-analytic classification, that is, the finiteness of the number of blow-analytic equivalence classes for fixed values of the number of branches and of the above mentioned discrete invariant.

We conclude this article with some topological considerations about the tools used, which will be further addressed in a future study \cite{new}.\\

We would like to thank the referee for the useful comments and suggestions made during the revision process.

\section{Blow-analytic invariants}
\begin{definition}
A function $f: \RR^2 \rightarrow \RR$ is \textit{blow-analytic} if there exists a composition of blow-ups $\beta = \beta_1 \circ \cdots \circ \beta_m : X \rightarrow \RR^2$ such that the composition $f \circ \beta$ is analytic.

A homeomorphism $h: \RR^2 \rightarrow \RR^2$ is \textit{blow-analytic} if $h$ and its inverse $h^{-1}$ both have blow-analytic components.
\end{definition}

In this article, we study the classification of embedded germs of real plane curves up to blow-analytic homeomorphism. We say that two germs of curves $(C, 0)$, $(D, 0)$ in $\RR^2$ with an isolated singularity at the origin are \textit{blow-analytically equivalent} if there exist a blow-analytic homeomorphism $h: \RR^2 \rightarrow \RR^2$ that carries $(C, 0)$ to $(D, 0)$.\\

It follows from the definition that two analytically equivalent curves are also blow-analytically equivalent. On the other hand, there are known examples of blow-analytically equivalent curves that are not even $\mathcal{C}^1$-equivalent or bi-Lipschitz equivalent (\cite{Koike}). In this sense, the classification of singular curves up to blow-analytic homeomorphism offers a more flexible approach than the analytic one.\\	

Now, let $X$ be a surface which is a tubular neighbourhood of the union of compact smooth curves $\{E_j\}_{j=1}^m$ intersecting transversally. We construct the weighted dual graph $\varGamma$ associated to $X$ by drawing a vertex $v_i$ for each central curve $E_i$, and connecting two vertices by an edge if and only if the corresponding curves intersect. To each vertex we assign as weight the $\mathbb{Z}/2\mathbb{Z}$-valued self-intersection number of the corresponding curve. In figures of the graphs $\varGamma$, we represent odd curves as white vertices and even curves as black vertices.

\begin{definition}
We say that $X$ is \textit{smoothly contractible} if it is a surface obtained from $(\RR^2, 0)$ by a finite sequence of blow-ups and blow-downs.
\end{definition}

It follows easily from the definition that if $X$ is smoothly contractible, then its dual graph $\varGamma$ is a tree.

Let $A = (a_{ij})$ be the $\mathbb{Z}/2\mathbb{Z}$-valued intersection matrix associated to $\varGamma$, i.e., the matrix whose entries are the $\mathbb{Z}/2\mathbb{Z}$-valued intersection numbers $a_{ij} = E_i \cdot E_j$. It has been proven in \cite{Kuo} that $X$ is smoothly contractible if and only if the determinant of $A$ is $1$. Since the information about the intersection matrix is captured by $\varGamma$, we also say that $\varGamma$ is \textit{smoothly contractible} if and only if the determinant of $A$ is $1$.\\

In the unibranched case, this invariant has been used to completely classify plane curve singularities up to blow-analytic homeomorphism.

\begin{theorem}[\cite{Kuo}]
All unibranched germs of plane curves are blow-analytically equivalent to a line.
\end{theorem}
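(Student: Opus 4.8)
The plan is to reduce the statement to a combinatorial fact about weighted dual graphs and then to exploit the contractibility criterion ($\det A = 1$ over $\mathbb{Z}/2\mathbb{Z}$) recalled above. Concretely, given a unibranched germ $(C,0)$, I would produce a single smooth surface $X$ carrying two towers of blow-ups and blow-downs down to $\RR^2$: one tower recovering the pair $(\RR^2, C)$ and the other recovering $(\RR^2, L)$, where $L$ is a line through the origin, in such a way that the strict transform of $C$ inside $X$ is carried onto $L$. The composite $\RR^2 \dashrightarrow X \dashrightarrow \RR^2$ is then the sought map: it is a homeomorphism because it is assembled from real blow-ups and blow-downs, and it is blow-analytic by construction, since both it and its inverse factor analytically through $X$.

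First I would take the minimal embedded resolution $\beta : X_0 \to \RR^2$ of $C$, with exceptional divisor $\mathcal{E} = \beta^{-1}(0) = \bigcup_{j=1}^m E_j$ and smooth strict transform $\hat{C}$. After finitely many further blow-ups at points of $\mathcal{E} \cup \hat{C}$ I may assume that $\hat{C}$ is transverse to $\mathcal{E}$ at a single point, lying on a single terminal component, which I may further arrange to be a $(-1)$-curve. The dual graph $\varGamma$ of $\mathcal{E}$ is then a tree, and it is smoothly contractible, i.e.\ $\det A = 1$, since $\mathcal{E}$ arose from the smooth point $0 \in \RR^2$ by a sequence of blow-ups. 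Adjoining to $\varGamma$ a vertex for $\hat{C}$, attached as a leaf to the component it meets, records all the data we need; the analogous picture for the line is the one-vertex graph (a single $(-1)$-curve) with a leaf for $\hat{L}$.

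The core step is to show that $\varGamma$, together with its marked leaf $\hat{C}$, can be transformed into the line's configuration by a sequence of elementary moves — subdividing an edge (blow-up at a node), appending a leaf (blow-up at a smooth point of a curve), and their inverses — never blowing down the marked curve, and keeping the $\mathcal{E}$-part smoothly contractible throughout; each such move lifts to a blow-up or blow-down of the ambient surface performed at a real point. I would argue by induction on the number of vertices of $\varGamma$: since $\varGamma$ is contractible there is always a legal $(-1)$-vertex of $\mathcal{E}$ available to blow down, and if doing so forces the marked leaf $\hat{C}$ onto a different component I correct its position with an auxiliary blow-up; iterating, the $\mathcal{E}$-part shrinks to the single $(-1)$-vertex of the line's configuration, and re-expanding by blow-ups to $(\RR^2, L)$ supplies the second tower.

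The main obstacle is precisely this combinatorial reduction: proving that the \emph{only} obstruction to reaching the line's configuration is the discrete data which, in the unibranched case, is forced to be trivial — in other words, that no parity or tree-shape invariant survives once there is a single branch, so that every unibranched resolution graph is moved to the same normal form. A secondary and more technical point is to verify that the birational map assembled from the two towers is an honest homeomorphism of $\RR^2$, and not merely a map defined away from a thin set, and that both it and its inverse genuinely have blow-analytic components; this hinges on performing each blow-down at a real point with the correct normal-bundle parity, which is exactly the information carried by the $\mathbb{Z}/2\mathbb{Z}$-valued self-intersection weights.
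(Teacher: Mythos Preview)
The paper does not prove this theorem at all: it is quoted from \cite{Kuo} as background, accompanied only by the remark that the $\mathbb{Z}/2\mathbb{Z}$ contractibility criterion ($\det A = 1$) is the relevant invariant. So there is no in-paper argument to compare your sketch against.

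Evaluated on its own, your outline is in the right spirit but has two issues worth flagging. First, the language of ``$(-1)$-curves'' and ``$(-1)$-vertices'' belongs to the complex category; over $\RR$ the only well-defined self-intersection is the $\mathbb{Z}/2\mathbb{Z}$ parity, and the contraction criterion is that a curve be \emph{odd}, not that it have square $-1$. This matters for your induction step ``since $\varGamma$ is contractible there is always a legal $(-1)$-vertex available to blow down'': in the real setting an odd vertex of valency $\leq 2$ need not exist in a smoothly contractible tree, and one sometimes has to blow \emph{up} first (e.g.\ at the intersection of two even curves) before any contraction is possible --- this is exactly why the paper's later operation C3 is needed alongside C1 and C2. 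Second, the ``main obstacle'' you name --- that no parity or tree-shape invariant survives in the unibranched case --- is not a lemma towards the theorem but essentially \emph{is} the theorem; your sketch defers it rather than addressing it. The actual argument (in the paper's later language) is that the reduction moves C1, C2, C3, M1 collapse any smoothly contractible graph with a single marked branch to one odd vertex carrying that branch, and this requires a short but genuine case analysis that your proposal does not yet supply.
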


Next, assume that $(C,0)$ has more than one branches and set $C = \bigcup_{i=1}^n C_i$ its irreducible decomposition. Let $X$ be a good resolution of $\RR^2$ at the origin, i.e., an embedded resolution which is a composition of successive blow-ups and blow-downs such that the support of the total transform of $C$ is simple normal crossing. We define $\varGamma^*$ to be the extension of $\varGamma$ obtained by adding a vertex for each component of the strict transform and an edge where a non-compact component (i.e., a component of the strict transform) intersects an exceptional curve. It follows from the goodness of the resolution that $\varGamma^*$ is a tree.

Blow-analytic equivalence of curve germs determines an equivalence relation for triplets $(X,\, \cup^n_{i=1} \tilde{C_i}, \, \cup_j E_j)$ (where $\tilde{C_i}$ is the strict transform of $C_i$), which induces an equivalence relation for trees $\varGamma$ and $\varGamma^*$. In case of ambiguity, we specify which curve germ corresponds to the dual graphs by writing $\varGamma(C)$ and $\varGamma^*(C)$.\\

Let $C = \bigcup_{i=1}^n C_i$ and $C' = \bigcup_{i=1}^n C'_i$ be two blow-analytically equivalent plane curve germs, then the blow-analytic homeomorphism $h:(\RR^2, \, C, \,0) \rightarrow (\RR^2,\, C', \,0)$ induces a bijection $\bar{h}: \{1, 2, \, \ldots \,, n\} \rightarrow \{1, 2, \,\ldots \,, n\}$ such that $h(C_i) = C'_{\bar{h}(i)}$.

Let $(X, \, \cup^n_{i=1} \tilde{C_i}, \, \cup_j E_j)$ and $(X', \, \cup^n_{i=1} \tilde{C'_i}, \, \cup_{j'} E'_{j'})$ be good embedded resolutions of $(C, 0)$ and $(C', 0)$ respectively, and let $(\tilde{X}, \, \cup^n_{i=1} \tilde{\tilde{C_i}}, \, \cup_{\tilde{j}} \tilde{E}_{\tilde{j}})$ be a common good resolution which dominates $(X, \, \cup^n_{i=1} \tilde{C_i}, \, \cup_j E_j)$ and $(X', \, \cup^n_{i=1} \tilde{C'_i}, \, \cup_{j'} E'_{j'})$. Consider a path $\gamma_{ij}$ in the exceptional set of $X$ connecting the strict transforms of $C_i$ and $C_j$, with $i\neq j$. We restrict ourselves to minimal paths, i.e., those $\gamma_{ij}$ which, amongst all paths connecting the strict transforms of $C_i$ and $C_j$, go through the minimum number of exceptional curves. Each path $\gamma_{ij}$ has a lift in $\tilde{X}$, and thus an image $\gamma'_{\bar{h}(i)\bar{h}(j)}$ in $X'$.

In the dual graph, $\gamma_{ij}$ (resp. $\gamma'_{\bar{h}(i)\bar{h}(j)}$) determines a path $\gamma^*_{ij}$ (resp. $(\gamma'_{\bar{h}(i)\bar{h}(j)})^*$) in $\varGamma^*(C)$ (resp. $\varGamma^*(C')$) between the vertices corresponding to the strict transforms of $C_i$ and $C_j$ (resp. $C'_{\bar{h}(i)}$ and $C'_{\bar{h}(j)}$). Since $\varGamma^*$ is a tree, for fixed $i,j$ there is a unique path $\gamma^*_{ij}$ in the dual graph corresponding to all minimal paths $\gamma_{ij}$ in the resolution.

\begin{lemma}\label{paths}
Let $\varGamma_{ij}(C)$ (resp. $\varGamma^*_{ij}(C)$) be the graph obtained by removing all vertices in $\gamma^*_{ij}$ and the connecting edges from $\varGamma(C)$ (resp. $\varGamma^*(C)$), and let $\Delta_{ij}(C)$ be the set of connected components $G$ in $\varGamma_{ij}(C)$ such that $\mu(G)\neq 0$, where $\mu(G)$ is the corank of the $\mathbb{Z}/2\mathbb{Z}$-valued intersection matrix associated to $G$. Let $\Delta^*_{ij}(C)$ denote the natural extension of $\Delta_{ij}(C)$ in $\varGamma^*(C)$.

Then, a blow-analytic homeomorphism $h:(\RR^2,\, C, \,0) \rightarrow (\RR^2,\, C',\, 0)$ induces a bijection $\hat{h}: \Delta^*_{ij}(C) \rightarrow \Delta^*_{\bar{h}(i)\bar{h}(j)}(C')$. In particular, $\mu(\varGamma_{ij}(C)) = \mu(\varGamma_{\bar{h}(i)\bar{h}(j)}(C'))$.
\end{lemma}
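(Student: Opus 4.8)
The plan is to reduce the statement to a combinatorial claim about the elementary transformations of weighted dual graphs, and then to follow the set of ``significant'' components through one such transformation. From the analysis of blow-analytic equivalence recalled above (cf. \cite{Kuo}, \cite{Kob}), the common good resolution $\tilde X$ dominates both $X$ and $X'$ by sequences of blow-downs, so the weighted dual graph $\tilde\varGamma^*$ of $\tilde X$ is carried both to $\varGamma^*(C)$ and to $\varGamma^*(C')$ by sequences of elementary graph contractions, i.e. the inverses of (i) attaching a new white ($-1$) leaf at a vertex $v$ and flipping the weight of $v$, and (ii) subdividing an edge $\{u,v\}$ by a new white vertex and flipping the weights of $u$ and $v$ (with the convention that strict-transform vertices carry no weight and are never flipped). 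Under these identifications the strict-transform vertices are preserved, the vertex of $\tilde{\tilde C_i}$ being sent to that of $\tilde C_i$ on one side and to that of $\tilde C'_{\bar h(i)}$ on the other; since $\varGamma^*$ is a tree, the unique path joining the vertices of $\tilde{\tilde C_i}$ and $\tilde{\tilde C_j}$ in $\tilde\varGamma^*$ is carried exactly onto $\gamma^*_{ij}$ and onto $(\gamma'_{\bar h(i)\bar h(j)})^*$ (here using the already-noted correspondence between minimal paths in the resolution and unique paths in the tree). It is therefore enough to show: if $\varGamma_2^*$ arises from $\varGamma_1^*$ by a single elementary transformation, fixing two distinguished strict-transform vertices and hence the path $\gamma^*$ between them, then this transformation induces a bijection between the set of significant components of $\varGamma_1^*\setminus\gamma^*$ and that of $\varGamma_2^*\setminus\gamma^*$ which preserves $\mu$ on each component. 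Composing these bijections along the zig-zag through $\tilde\varGamma^*$ produces $\hat h$, and the additivity of the corank over connected components (the corank of a component being read off from its weighted vertices, which are the same in $\varGamma$ and in $\varGamma^*$) then yields the equality $\mu(\varGamma_{ij}(C))=\mu(\varGamma_{\bar h(i)\bar h(j)}(C'))$.

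The key local ingredient is a sublemma: an elementary transformation of a connected weighted graph $G$ leaves the corank $\mu(G)$ of its $\mathbb{Z}/2\mathbb{Z}$-valued intersection matrix unchanged. I would prove it by an explicit reduction over $\mathbb{Z}/2\mathbb{Z}$: writing the intersection matrix of the blown-up graph in block form with the new white vertex $w$ last, the single symmetric operation of adding the $w$-row (resp. $w$-column) to the rows (resp. columns) indexed by the neighbours of $w$ cancels the off-diagonal $w$-entries and simultaneously restores the flipped weights, exhibiting the matrix as the block-diagonal sum of the original intersection matrix of $G$ and the $1\times 1$ block $(1)$; as the latter has corank $0$, the corank is preserved. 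This is the local counterpart of the fact --- already used above --- that smooth contractibility, i.e. $\mu=0$ for the whole graph, is invariant under blow-ups and blow-downs.

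What remains is a short case analysis of the position of an elementary transformation relative to the fixed path $\gamma^*$. If it lies strictly inside $\gamma^*$, every vertex it modifies is on the path and disappears in $\varGamma^*\setminus\gamma^*$; the only vertex it can create there is a white leaf hanging off a path-vertex, which becomes an isolated vertex $\{w\}$ with $\mu(\{w\})=0$, hence not significant, so the set of significant components is untouched. If it lies strictly inside a component $G$ of $\varGamma^*\setminus\gamma^*$, it is an elementary transformation of $G$ in the sense of the sublemma; it cannot disconnect $G$ (a blow-up adds a vertex, a blow-down of a leaf or of a vertex of valence $\le 2$ keeps $G$ connected and merely adds an edge) nor merge $G$ with another component, so $G$ is replaced by one component of equal $\mu$. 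The genuinely delicate case is the interface, where the transformation involves an edge joining a path-vertex $v$ to a vertex $u$ of a component $G$ (or the white vertex of $\varGamma^*$ straddling such an edge): subdividing $\{v,u\}$ flips the weight of $u$ and produces a white vertex $w$ that, once $v$ is removed with the path, is a new leaf at $u$ --- so the net effect on $G$ is exactly a blow-up at $u$, and $\mu$ is preserved by the sublemma; blowing down a straddling vertex is the inverse, handled identically, and a white leaf at a path-vertex falls under the isolated-vertex case. The main obstacle is precisely this interface bookkeeping: one must check that no elementary transformation covertly splits or merges components of $\varGamma^*\setminus\gamma^*$, and that the weight-flip which an interface blow-up imposes on the component-vertex $u$ is paired with the new leaf at $u$ so as to stay within the reach of the sublemma. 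Granting this, significance is preserved component by component, $\hat h$ is obtained by composition, and the ``in particular'' clause follows as above.
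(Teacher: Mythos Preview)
Your proof is correct and follows essentially the same route as the paper's: both pass to a common good resolution $\tilde X$ dominating $X$ and $X'$, reduce to the effect of a single blow-up on $\Delta^*_{ij}$, perform a three-way case analysis on the position of the blow-up centre relative to $\gamma^*_{ij}$, and invoke the block reduction $A' \sim_{\mathbb{Z}/2\mathbb{Z}} \left(\begin{smallmatrix}1&0\\0&A\end{smallmatrix}\right)$ to preserve coranks. Your treatment of the interface case---a blow-up at the intersection of a path-curve and a component-curve---is in fact more careful than the paper's Case~2, which asserts that blowing up at a non-$\gamma_{ij}$ point of a path-curve always adds an \emph{isolated} odd vertex to $\varGamma^*_{ij}$; this is literally true only when the centre is a smooth point of the exceptional set, whereas at an interface intersection the new vertex attaches to the adjacent component $G$ and, as you correctly note, the net effect on $G$ is an internal blow-up at $u$.
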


\begin{proof} As before, take a good resolution $(\tilde{X}, \, \cup^n_{i=1} \tilde{\tilde{C_i}}, \, \cup_{\tilde{j}} \tilde{E}_{\tilde{j}})$ dominating $(X, \, \cup^n_{i=1} \tilde{C_i}, \, \cup_j E_j)$ and $(X', \, \cup^n_{i=1} \tilde{C'_i}, \, \cup_{j'} E'_{j'})$, and let $\beta_k$ be a step in the sequence of blow-ups from $X$ to $\tilde{X}$.

For any $k$, if the centre of $\beta_k$ is a point in $\gamma_{ij}$, then the exceptional curve $E_k$ intersects the lift of $\gamma_{ij}$, therefore it does not contribute to $\varGamma^*_{ij}(C)$.

If the centre of $\beta_k$ is not in $\gamma_{ij}$ but on a curve belonging to $\gamma^*_{ij}$, after the blow-up an isolated odd vertex is added to $\varGamma^*_{ij}(C)$, thus creating a new connected component which is smoothly contractible and does not contribute to $\Delta_{ij}(C)$.

Finally, if the centre of $\beta_k$ is not on any curve in $\gamma^*_{ij}$, the vertex corresponding to the exceptional curve $E_k$ extends one of the connected components $G$ of $\varGamma_{ij}(C)$. Let us call $G'$ the extended component. Slightly abusing the notation, let $A' =~ (E'_p \cdot E'_q )$ be the $\mathbb{Z}/2\mathbb{Z}$-valued intersection matrix associated to $G'$. By a change of basis, $A' \approx_{\mathbb{Z}}~ \left( \begin{array}{cc}
1 & 0 \\
0 & A \end{array} \right)$, where $A$ is the $\mathbb{Z}/2\mathbb{Z}$-valued intersection matrix associated to $G$. Clearly, $\mu(G') = \mu(G)$, so $\beta_k$ preserves the corank of the connected components of $\varGamma_{ij}(C)$.

Since this holds for each step $\beta_k$ in the blow-up sequence, there exists a bijection between the elements of $\Delta^*_{ij}(C)$ and the elements of its lift in $\tilde{X}$. Furthermore, since $\gamma'_{\bar{h}(i)\bar{h}(j)}$ is the image of $\gamma_{ij}$ in $X'$, we have a bijection $\hat{h}: \Delta^*_{ij}(C) \rightarrow \Delta^*_{\bar{h}(i)\bar{h}(j)}(C')$. \phantom{blah}
\end{proof}

Let $I = \{ I_k : k=1, \, \ldots \,,p\}$ denote a partition of $\{1, 2, \, \ldots \,, n\}$ (i.e., $I_1 \cup \cdots \cup I_p = \{1, \,\ldots \,, n\}$ and $I_k\cap I_l=\emptyset$ if $1\leq k < l \leq p$). By considering the union of minimal paths $\gamma_{ij}$ between two components $\tilde{C}_i$ and $\tilde{C}_j$ of the strict transform with $i, j\in I_k$ and $i \neq j$, the proof of the above lemma can be generalised to a partition on the set of strict transform components. Namely, we have that a blow-analytic homeomorphism $h:(\RR^2,\, C, \,0) \rightarrow (\RR^2,\, C', \,0)$ induces a bijection $\hat{h}: \Delta_I(C) \rightarrow \Delta_{\bar{h}(I)}(C')$ (where $\bar{h}(I) = \{\bar{h}(I_k) : k=1, \, \ldots \,, p\}$) and, in particular, the corank $\mu(\varGamma_I)$ is a blow-analytic invariant.\\

When $I = \{ \{1\}, \{2\}, \, \ldots \,, \{n\} \}$, $\mu_I$ generalises the invariant $\mu$ defined in \cite{Kob}, although the author uses a different method to prove its invariance.

When $I = \{1, 2,\, \ldots \,, n\}$, we write $\mu_I$ as $\mu'$ for convenience. As we shall see, the value of $\mu'$ bounds from below the least number of components in the exceptional divisor of a good resolution of any curve germ in that equivalence class.\\

In the case of bibranched singularities, $\mu'$ provides the following classification:

\begin{theorem}[\cite{Kob}]
Bibranched germs of plane curves have isomorphic resolution graphs if and only if they have the same $\mu'$.
\end{theorem}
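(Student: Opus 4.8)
The ``only if'' direction is immediate: by Lemma~\ref{paths} (with $i=1$, $j=2$) the number $\mu'=\mu(\varGamma_{12})$ is a blow-analytic invariant, so it depends only on the germ and not on the chosen good resolution, and it is plainly an isomorphism invariant of the marked weighted graph $\varGamma^*$ (the two strict-transform vertices being distinguished, and the path between them symmetric in them). Hence germs with isomorphic resolution graphs have the same $\mu'$. The content of the theorem is the converse, which I would prove by exhibiting a normal form for the dual graph of a bibranched germ depending only on $\mu'$.

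First I would record the moves that blow-ups and blow-downs induce on the $\mathbb{Z}/2\mathbb{Z}$-weighted dual graph: a blow-up at a smooth point of an exceptional curve $E_v$ adds a white leaf at $v$ and flips the parity of $v$; a blow-up at a point where $E_v$ and $E_w$ meet subdivides the edge $vw$ by a white vertex and flips the parities of $v$ and $w$ (likewise with a strict-transform component in place of an $E$, inside $\varGamma^*$); blow-downs invert these and are available at white $(-1)$-vertices of valence $\le 2$, which is exactly the condition that keeps the normal-crossing property. By the argument of Lemma~\ref{paths}, any move performed off the path $\gamma^*_{12}$ leaves $\Delta^*_{12}$, hence $\mu'$, unchanged, and the moves along $\gamma^*_{12}$ change $\varGamma_{12}$ only by smoothly contractible components, so they preserve $\mu'$ as well; auxiliary blow-ups may be used freely to create the $(-1)$-vertices needed for later blow-downs.

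Next I would exploit the feature peculiar to the bibranched case: $\varGamma^*$ is a tree with a single distinguished path $\gamma^*_{12}$ joining $\tilde C_1$ and $\tilde C_2$, and every other vertex lies on a subtree pendant to that path, these pendant subtrees being produced by the resolution of the two branches and of their contact. The reduction then proceeds in three steps. \emph{(a)} Every smoothly contractible pendant subtree is removed entirely: by the criterion of \cite{Kuo} such a subtree corresponds to a configuration obtainable from $(\RR^2,0)$ by blow-ups and blow-downs, so it can be contracted away (absorbing the bookkeeping into the weight of its attaching vertex), and it contributes $0$ to $\mu'$ by the definition of $\Delta_{12}$. \emph{(b)} Each remaining pendant subtree is brought to a canonical shape using the moves above, the point being that over $\mathbb{Z}/2\mathbb{Z}$ no invariant of such a subtree finer than its corank survives the moves. \emph{(c)} The attachment points of the pendant subtrees are relocated and $\gamma^*_{12}$ is trimmed to its minimal admissible length, using subdivision and contraction moves, so that the outcome is a completely explicit graph $\varGamma^*_m$ determined by $m=\mu'$ (the bibranched hypothesis forcing which pendant configuration of total corank $m$ occurs). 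Since all moves are invertible within the class of good resolutions, $\varGamma^*(C)$ and $\varGamma^*(C')$ are then isomorphic whenever $\mu'(C)=\mu'(C')$; together with the ``only if'' direction this proves the theorem.

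The heart of the matter is steps \emph{(b)} and \emph{(c)}: proving that the moves are rich enough to normalise every pendant subtree to a shape depending only on its corank and to bring the subtrees and the path into a single canonical configuration --- in other words, that $\mu'$ is a \emph{complete} invariant of the bibranched dual graph modulo blow-ups and blow-downs. This is exactly where ``bibranched'' is used: with one path $\gamma^*_{12}$ the backbone of $\varGamma^*$ is essentially rigid and all combinatorial freedom sits in pendant subtrees whose only surviving invariant is a corank; for $n\ge 3$ the backbone is itself a nontrivial tree of interacting paths $\gamma^*_{ij}$, this normalisation collapses, and one can then only establish the ``local'' finiteness of the Main Theorem rather than a classification by a single number.
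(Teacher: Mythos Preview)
The paper does not prove this theorem at all: it is quoted as a result of Kobayashi \cite{Kob} and no argument is supplied. So there is no ``paper's own proof'' to compare your proposal against.

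As for the proposal itself, it is a plausible strategy and is in the spirit of the standard-form machinery the paper develops afterwards (operations C1--C3, M1 and properties P1--P4). But it is not a proof. You yourself identify the gap: steps \emph{(b)} and \emph{(c)} are precisely the content of the theorem, and you have only asserted them. The claim that ``over $\mathbb{Z}/2\mathbb{Z}$ no invariant of such a subtree finer than its corank survives the moves'' is exactly what has to be demonstrated; likewise ``the attachment points of the pendant subtrees are relocated and $\gamma^*_{12}$ is trimmed'' needs an explicit algorithm together with a verification that the outcome is uniquely determined by $\mu'$. Note also that the paper explicitly says it is \emph{not} able to prove uniqueness of standard forms in general, so an appeal to the paper's reduction process alone would not close the argument either --- one must use something specific to $n=2$ to get uniqueness, and that is the step you have left open.

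A minor point: your ``only if'' paragraph is slightly off target. Having isomorphic resolution graphs is a combinatorial hypothesis, not a blow-analytic one, so invoking Lemma~\ref{paths} (which compares graphs of \emph{blow-analytically equivalent} germs) is unnecessary; it suffices to observe that $\mu'$ is computed directly from the weighted graph $\varGamma^*$.
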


Our aim is to study the classification of singularities with three branches. In the following sections, firstly we present some general results for $n$-branched germs and then focus on the tribranched case.

\section{Finiteness}

We approach the problem of the classification of embedded plane curve singularities by providing a classification of the dual graphs of their resolutions. Namely, given a smoothly contractible graph $\varGamma$, we perform blow-ups and blow-downs to simplify $\varGamma$ and reduce it to a standard form, without changing the blow-analytic equivalence class of the corresponding embedded curve germ $(C,0)$. This method allows us to make easy combinatorial computations and graphic representations.

Two blow-analytically equivalent germs have by definition a pair of isomorphic dual graphs. It should be noted that the converse is not true: in fact, we can explicitly construct examples of non-equivalent singularities with isomorphic dual graphs, as shown in the last section of this paper. However, to any dual graph correspond only a finite number of blow-analytically distinct embedded plane curve germs.\\

In what follows, we denote $Q$ an even vertex with valency $1$ in $\varGamma^*$, where the \textit{valency} of a vertex is the number of edges incident to it. We call a vertex \textit{extremal} if it has valency $1$ in $\varGamma$, and we call \textit{special vertex} a vertex with valency $3$ or more in $\varGamma^*$. We remark that a configuration is not smoothly contractible if $\varGamma$ contains two vertices of type $Q$ attached to the same vertex. In fact, if the graph contains such a part, then the determinant of the $\mathbb{Z}/{2\mathbb{Z}}$-valued intersection matrix associated to $\varGamma$ vanishes (\cite{Kuo}).\\

Let $X$ be a good resolution and $\varGamma^*$ its extended dual graph. The operations listed below are a composition of blow-ups and blow-downs of $X$, expressed for simplicity in the graph language.
\begin{itemize}
	\item[C1] (\textit{Contraction $1$}): contract an odd vertex with valency $1$ in $\varGamma^*$;
	\item[C2] (\textit{Contraction $2$}): contract an odd vertex with valency $2$ in $\varGamma^*$;
	\item[C3] (\textit{Contraction $3$}): remove two adjacent even vertices, each having valency at most $2$ in $\varGamma^*$, by first blowing up at the intersection of the two exceptional curves and then performing C2 three times (contracting the newly created exceptional curve last);
	\item[M1] (\textit{Modification $1$}): if a vertex of type $Q$ is attached to an odd vertex, change the parity of the latter as shown in \cite{Kob}; namely, perform a blow-up at the point where the even curve in $Q$ intersects the odd curve and then contract the extremal odd vertex.
\end{itemize}

Given a graph $\varGamma^*$ as above, perform contractions C1, C2 and C3 repeatedly, until no more contractions can be made. Since the size of the graph is finite and each contraction decreases the number of vertices in $\varGamma$, after a finite number of steps $\varGamma$ will be minimal under C1, C2 and C3. Next, apply M1 wherever it is possible. If $n=0$ (i.e., the embedded curve germ is an isolated point), the minimal graph under the above operations is an odd vertex with valency $0$, which we further contract, obtaining the empty graph.

The resulting surface $X$ is blow-analytically equivalent to the original one, and its dual graph is reduced to a \textit{standard form} of $\varGamma$.

\begin{proposition}
A standard form of $\varGamma$ satisfies the following properties:
\begin{itemize}
	\item[\normalfont{P1}] All non-special vertices are even;
	\item[\normalfont{P2}] All special vertices adjacent a vertex of type $Q$ are even;
	\item[\normalfont{P3}] The segment between two special vertices is at most one even vertex;
	\item[\normalfont{P4}] There are exactly $\mu'$ vertices of type $Q$.
\end{itemize}
\end{proposition}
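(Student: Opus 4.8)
The plan is to exploit that a graph $\varGamma$ in standard form is minimal \emph{simultaneously} under all four operations C1, C2, C3 and M1. It is C1/C2/C3-minimal by construction, so the first thing to verify is that subsequently applying M1 wherever possible reopens none of these: by C1/C2-minimality, an odd vertex bearing a $Q$ already has valency $\ge 3$ in $\varGamma^*$, and M1 replaces it by an even vertex of the same valency carrying a fresh $Q$, so no new odd vertex of small valency appears and M1 is no longer applicable there.

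Granting this, P1--P3 are essentially formal. P1: after C1 and C2 every odd vertex has valency $\ge 3$ in $\varGamma^*$, hence is special, so every non-special vertex is even. P2: an odd special vertex carrying a $Q$ would still admit M1. P3: by P1 the vertices of a segment between two special vertices are even, and interior to the segment they have valency exactly $2$ in $\varGamma^*$; two consecutive such vertices would be adjacent even vertices of valency $\le 2$, whence C3 applies --- so a segment contains at most one vertex.

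The real content is P4, and here the strategy is to compute $\mu'$ directly on the standard form. By definition $\mu' = \mu(\varGamma_I)$ for the one-block partition $I = \{1,\dots,n\}$, with $\varGamma_I$ obtained from $\varGamma$ by deleting the minimal subtree $T$ of $\varGamma$ spanning the exceptional curves met by the strict transforms; since $\mu'$ is a blow-analytic invariant while C1, C2, C3, M1 do not change the blow-analytic class, it is enough to show that a standard form has exactly $\mu(\varGamma_I)$ vertices of type $Q$. Every $Q$, being a leaf of $\varGamma^*$ that is not a strict-transform vertex, cannot lie in the minimal subtree of $\varGamma^*$ spanning the strict transforms, hence lies in $\varGamma_I$. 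I would then prove the key claim that \emph{every connected component of $\varGamma_I$ is a single vertex of type $Q$}; since $\mu(\{Q\}) = 1$ and coranks add over a disjoint union, summing over the components would give $\mu(\varGamma_I)$ equal to the number of $Q$'s, which is P4.

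For the key claim, let $G$ be a connected component of $\varGamma_I$. As $\varGamma$ is a tree and $T$ a connected subtree, $G$ is joined to the rest of $\varGamma^*$ along a single edge, at a vertex $g_0$, and every vertex of $G$ other than $g_0$ has the same valency in $G$ and in $\varGamma^*$. By C1-minimality every leaf of $G$ different from $g_0$ is then an even valency-$1$ vertex, that is, of type $Q$; if $|G| = 1$ the component is itself such a vertex. If $|G| \ge 2$, remove from $G$ all its $Q$-leaves to obtain a nonempty subtree $G^-$ containing $g_0$; any leaf of $G^-$ is either $g_0$, or a vertex $v \neq g_0$ bearing exactly one $Q$ (only one, since $\varGamma$ is smoothly contractible) together with one further neighbour, and in the latter case $v$ is even, has valency $\le 2$ in $\varGamma^*$ and is adjacent to a $Q$, so C3 applies --- a contradiction. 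Thus $g_0$ is the sole leaf of $G^-$, forcing $G^- = \{g_0\}$, so $G$ is a star of $Q$'s at $g_0$ carrying (again by smooth contractibility) at most one $Q$; hence $G = \{g_0, Q\}$, but then $g_0$ is even of valency $2$ in $\varGamma^*$ adjacent to a $Q$, so C3 applies once more. Therefore $|G| = 1$ in all cases. I expect this dissection to be the main obstacle: every reducedness property, together with the smooth-contractibility consequence that no vertex carries two $Q$'s, is needed in order to exclude every configuration but an isolated $Q$. Finally, the degenerate low-$n$ cases in which $T$ is empty (so $\varGamma_I = \varGamma$) must be disposed of separately, using once more that $\varGamma$ itself is smoothly contractible to conclude that the standard form is then empty; and one should keep in mind the standing assumption, harmless because $\varGamma^*$ is a tree, that each strict transform meets the exceptional divisor in a single point.
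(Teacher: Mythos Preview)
Your argument is correct and follows essentially the same route as the paper: P1--P3 are read off directly from minimality under C1, C2, C3 and M1, and P4 is obtained by showing that every connected component of $\varGamma_I$ (for the one-block partition) reduces to a single isolated $Q$, so that $\mu'$ counts the $Q$'s. Your write-up is in fact more careful than the paper's---you explicitly check that applying M1 cannot re-enable C1/C2/C3, you give a clean tree-dissection for the key claim in P4, and you flag the degenerate $n\le 1$ cases---all of which the paper's terse proof leaves implicit.
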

\begin{proof}
P1 follows from the fact that any odd non-special vertex has been contracted by C1 or C2. P2 is a consequence of M1. P3 follows from P1 and by C3. To prove P4, set $I =\{1, 2, \, \ldots \, , n\}$ and consider the extremal vertices of each connected component in $\varGamma_I$. By P1, they are all even. Moreover, by C3 they can only be part of a path of length $1$ and two of them cannot be connected to the same vertex, since $\varGamma$ is smoothly contractible. Thus $\varGamma_I$ contains only even vertices, all disconnected, and exactly $\mu'$ of them, since each contributes to the corank by $1$.
\end{proof}

An arbitrary $\varGamma$ can always be reduced as shown above. Therefore, we shall restrict our attention to the easier task of classifying standard configurations.

\begin{proposition}\label{branches} 
For an $n$-branched embedded plane curve germ, a standard form of $\varGamma$ has at most $\mu' + n$ extremal vertices.
\end{proposition}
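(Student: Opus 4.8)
The plan is to set up an injection from the set of extremal vertices of a standard form $\varGamma$ into the set of leaves of the tree $\varGamma^*$, and then to count the latter. Recall that an extremal vertex has valency $1$ in $\varGamma$, and that passing from $\varGamma$ to $\varGamma^*$ only adds edges joining an exceptional vertex to a strict-transform (arrowhead) vertex. First I would identify the leaves of $\varGamma^*$. As the resolution is good, $\varGamma^*$ is a tree, and each of the $n$ branches contributes exactly one arrowhead vertex, necessarily of valency $1$; these account for $n$ leaves. Any other leaf of $\varGamma^*$ is an exceptional vertex of valency $1$ there; being of valency $1$ it is non-special, hence even by P1, hence of type $Q$, and by P4 there are exactly $\mu'$ such vertices. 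Thus $\varGamma^*$ has precisely $\mu' + n$ leaves.

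It then remains to inject the extremal vertices of $\varGamma$ into these leaves. Let $v$ be an extremal vertex; then $v$ has valency $1$ in $\varGamma$ and valency $1 + t$ in $\varGamma^*$, where $t \ge 0$ is the number of arrowheads attached to $v$. If $t = 0$, then $v$ is itself a leaf of $\varGamma^*$, and I send $v$ to $v$; if $t \ge 1$, I send $v$ to any one arrowhead attached to it. This assignment is injective: vertices of the first kind are sent among the exceptional leaves and vertices of the second kind among the arrowhead leaves, so there is no clash between the kinds; on the first kind it is the identity; and on the second kind it is injective because an arrowhead vertex, having valency $1$ in $\varGamma^*$, is adjacent to a unique exceptional vertex, which must be $v$. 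Combining with the count above gives at most $\mu' + n$ extremal vertices.

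This is in essence a bookkeeping argument once the structure proposition is in hand, so I do not expect a serious obstacle; the one delicate point is the split between extremal vertices of $\varGamma$ that survive as leaves of $\varGamma^*$ and those that become internal in $\varGamma^*$ after one or more branches pass through them. A careless treatment could double-count the latter, or could overlook that the former are exactly the $Q$-vertices (via P1) and so miscount them.
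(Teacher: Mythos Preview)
Your argument is correct and is essentially the paper's proof dressed in slightly more formal clothing: the paper partitions the extremal vertices of a standard form into those adjacent to some non-compact component (at most $n$) and those that are not, observes via P1 that the latter are exactly the vertices of type $Q$ (hence exactly $\mu'$ of them by P4), and concludes. Your injection into the leaves of $\varGamma^*$ encodes precisely this dichotomy, so the two proofs coincide in substance.
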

\begin{proof}
Since we assume $\varGamma$ to be a standard form, its extremal vertices must be either even vertices (corresponding to vertices of type $Q$) or vertices adjacent to at least a non-compact component. There are exactly $\mu'$ vertices of the first kind and at most $n$ of the second kind, thus there are at most $\mu' + n$ extremal vertices.
\end{proof}

\textit{Remark}. The number of extremities could be strictly less than $\mu' + n$. In fact, more than one non-compact components could be adjacent to the same extremal vertex, or it could also happen that some non-compact components are attached to non-extremal vertices.\\

We can gain additional information about standard forms by looking at the weights of extremal vertices and at the vertices to which they are connected.\\

Consider the case where an extremal vertex is adjacent to a non-compact component. If the extremal vertex is odd, then it must be adjacent to at least two non-compact components (otherwise it has valency $2$ in $\varGamma^*$ and can be smoothly contracted). 

Now, assume that the extremal vertex is even and adjacent to exactly one non-compact component. Then the preceding vertex $v$ must be a special vertex (if it were a even vertex with valency $2$, $\varGamma$ could be further reduced by C3 without losing normal crossingness) and, to avoid configurations which are not smoothly contractible, there cannot be a vertex of type $Q$ attached to $v$. Thus $v$ must be either adjacent to a non-compact component or have valency at least $3$ in $\varGamma$.

The above considerations prove the following lemma.
\begin{lemma}\label{extremal}
There are only four kinds of extremal vertices in a standard form:
\begin{itemize}
	\item vertices of type $Q$;
	\item vertices adjacent to at least two non-compact components;
	\item even vertices adjacent to exactly one non-compact component and preceded by another vertex adjacent to a non-compact component;
	\item even vertices adjacent to exactly one non-compact component and preceded by a vertex with valency at least $3$ in $\varGamma$.
\end{itemize}
\end{lemma}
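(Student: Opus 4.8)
The plan is a short exhaustive case analysis on an extremal vertex, organised by the number $m\ge 0$ of non-compact components attached to it. The only inputs required are the definition of a standard form (minimality under C1, C2, C3 and saturation under M1), Properties P1--P4, the smooth contractibility of $\varGamma$, and the elementary identity $\operatorname{val}_{\varGamma^*}(x)=\operatorname{val}_{\varGamma}(x)+\#\{\text{non-compact components at }x\}$, valid for every vertex $x$ (the edges to $Q$-vertices, being edges to compact curves, are already counted in $\operatorname{val}_{\varGamma}$).

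Let $u$ be an extremal vertex, so $\operatorname{val}_{\varGamma}(u)=1$ and $\operatorname{val}_{\varGamma^*}(u)=1+m$. If $m=0$, then $u$ has valency $1$ in $\varGamma^*$; it cannot be odd, for then C1 would contract it, contradicting minimality, so $u$ is even and is a $Q$-vertex. If $m\ge 2$, then $u$ is adjacent to at least two non-compact components and we are in the second listed case. It remains to treat $m=1$, i.e.\ $\operatorname{val}_{\varGamma^*}(u)=2$: if $u$ were odd, C2 would contract it, again contradicting minimality, so $u$ is even and adjacent to exactly one non-compact component.

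For $m=1$, let $v$ be the unique neighbour of $u$ in $\varGamma$. First I would show $v$ is special: otherwise $\operatorname{val}_{\varGamma^*}(v)\le 2$ and, by P1, $v$ is even, so $u$ and $v$ are two adjacent even vertices each of $\varGamma^*$-valency at most $2$ and C3 would reduce $\varGamma$ further, contradicting minimality. Hence $\operatorname{val}_{\varGamma^*}(v)\ge 3$. If $\operatorname{val}_{\varGamma}(v)\ge 3$, then $u$ falls in the fourth listed case. If instead $\operatorname{val}_{\varGamma}(v)\le 2$, then the number of non-compact components at $v$ equals $\operatorname{val}_{\varGamma^*}(v)-\operatorname{val}_{\varGamma}(v)\ge 1$, so $v$ is adjacent to a non-compact component and $u$ falls in the third listed case. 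Combined with the cases $m=0$ and $m\ge 2$, this exhausts all possibilities and proves the lemma.

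The argument is essentially bookkeeping, and I expect the one point needing genuine care to be the reduction behind the claim that $v$ is special: one must verify that C3 really applies to the pair $(u,v)$ — that blowing up the intersection of the two exceptional curves and performing C2 three times preserves the simple normal crossing condition on the total transform of $C$ and does not change the blow-analytic class — which is exactly the content flagged by the parenthetical ``without losing normal crossingness'' in the preceding discussion. Relatedly, when $\operatorname{val}_{\varGamma}(v)\le 2$ one should check that $Q$-vertices attached to $v$ do not interfere with the dichotomy above; this uses the fact, recalled earlier, that a smoothly contractible graph cannot carry two $Q$-vertices on a common vertex. Both verifications are routine but constitute the substance of the lemma.
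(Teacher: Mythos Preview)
Your proof is correct and follows essentially the same route as the paper's: a case split on the number $m$ of non-compact components at the extremal vertex, using C1/C2 to dispose of odd vertices with $m\le 1$, then for the even $m=1$ case invoking C3 to force the preceding vertex $v$ to be special, and finally splitting on whether $\operatorname{val}_{\varGamma}(v)\ge 3$ or $v$ carries a non-compact component. The paper's argument is exactly the paragraph immediately preceding the lemma; the only cosmetic difference is that the paper explicitly rules out a $Q$-vertex at $v$ via smooth contractibility before stating the dichotomy, whereas you absorb this into the valency count and flag it as a routine check.
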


We now prove our main result.

\begin{maintheorem}
The number of blow-analytic equivalence classes of $n$-branched germs of plane curves with $\mu' = k$ is finite for any $k$ in $\mathbb{N}$.
\end{maintheorem}
\begin{proof}
Given a germ of plane curve $(C,0)$, take a good resolution of the embedded singularity and consider its dual graph $\varGamma$. By the process described above, the tree $\varGamma$ can be reduced to its standard form, which, by Proposition \ref{branches}, has at most $k + n$ branches. Furthermore, the length of each branch is limited by the properties of standard forms. Since the number of trees of a finite size is finite, it follows that there are only a finite number of standard forms, given $n$ and $k$.

Observe that the number of smooth surfaces $X$ corresponding to a given standard form, as well as the number of choices for the positions of the $n$ non-compact components on $X$, is finite up to diffeomorphism. Thus, only finitely many blow-analytic equivalence classes of embedded plane curve singularities exist for fixed $n$ and $k$ in $\mathbb{N}$.
\end{proof}

\textit{Remark}. The theorem states the ``local'' finiteness of the blow-analytic classification, that is, for fixed values of $n$ and $\mu'$. Globally, the classification is infinite. In fact, it is easy to find standard configurations (and thus blow-analytic equivalence classes) for any number of branches and any $\mu'$ in $\mathbb{N}$.

\section{An upper bound}

While it is difficult to recover a generating formula for the exact number of blow-analytic equivalence classes given the number of branches $n$ and the value of $\mu'$, an upper bound to the number of standard forms can be estimated using combinatorial methods and some observations about the shape of $\varGamma$.

\begin{proposition}
In the tribranched case, the number of standard forms of $\varGamma$ with $\mu'=k$ is less than or equal to
\[
(k^3 -2k^2 -k +11) 2^{k-2}.
\]
\end{proposition}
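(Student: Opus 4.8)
The plan is to enumerate all possible shapes of a tribranched standard form $\varGamma$ and bound the number of each shape. Recall that $\varGamma^*$ is a tree with exactly three non-compact leaves (the strict transforms of $C_1,C_2,C_3$), so the ``compact part'' $\varGamma$ is a tree whose extremal vertices are, by Lemma \ref{extremal}, of four prescribed types. Since a tree with three distinguished leaves has at most one branch point of valency $3$ (or the three special leaves all lie on a path, i.e. one of them hangs off an interior vertex), the underlying topological type of $\varGamma$ is severely restricted: it is either a ``star'' with one central special vertex and three arms, or a ``caterpillar'' path with the $Q$-vertices and the curve-bearing vertices hanging off it. First I would make this dichotomy precise using Proposition \ref{branches} (at most $k+3$ extremal vertices, of which at most $3$ carry non-compact components and exactly $k$ are of type $Q$) together with P1--P3.

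Next I would set up the counting. By P3, any maximal path in $\varGamma$ joining two special vertices contains either no vertex or a single even vertex, so the only data one must record are: (i) the ``backbone'' tree $T$ formed by the special vertices and the incidences among them (finitely many combinatorial types, which I would list — essentially one or two branch vertices and the way the three curve-leaves and the $k$ $Q$-leaves attach); (ii) for each edge of that backbone, a binary choice of whether an even vertex sits in the middle; and (iii) the parity labels of the special vertices, constrained by P2 (a special vertex adjacent to a $Q$ must be even) and by smooth contractibility (no two $Q$'s on the same vertex, $\det A = 1$). The $2^{k-2}$ factor is exactly the count of the ``middle vertex or not'' choices along the $k$ edges leading to the $Q$-leaves, after two of them have been pinned down by the contractibility constraint of Lemma \ref{extremal} / P2; the polynomial factor $k^3 - 2k^2 - k + 11$ I would obtain as the (finite) number of ways to distribute the $k$ $Q$-leaves and the $3$ curve-leaves among the $O(1)$ backbone positions, refined by parity choices, summed over the handful of backbone topologies.

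Concretely, the key steps in order are: (1) prove the backbone $T$ has at most two vertices of valency $\geq 3$ and classify the $O(1)$ resulting shapes; (2) for each shape, let the $k$ $Q$-vertices be distributed onto the available attachment slots and count set-partition-type data, getting a polynomial in $k$ of degree $3$ (three being the number of independent ``slots'' created by the three curve-branches); (3) multiply by the $2^{k-2}$ coming from the optional even middle-vertices on the $Q$-arms, having used contractibility to fix two of these choices; (4) multiply by the bounded number of admissible parity assignments to special vertices compatible with P2 and $\det A =1$; (5) sum the contributions of all shapes and check the total is $\le (k^3-2k^2-k+11)2^{k-2}$, discarding by inclusion the overlapping/degenerate cases (e.g. small $k$, coincident branch points) so as not to overcount.

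The main obstacle I expect is step (5): the several backbone topologies and parity cases overlap (a star degenerates to a caterpillar when an arm has length zero, a $Q$ attached directly to a branch vertex versus through a middle vertex, etc.), so a naive sum over cases would overcount, and the clean closed form $(k^3-2k^2-k+11)2^{k-2}$ only emerges after carefully organizing the enumeration so that each standard form is counted once (or, more realistically, after producing an honest upper bound that happens to collapse to this expression). Getting the small additive constant $11$ and the exponent $k-2$ exactly right — rather than $2^{k}$ times a cruder polynomial — is where the contractibility constraints from \cite{Kuo} (no double $Q$, $\det A = 1$) must be used most sharply, and that bookkeeping is the delicate part of the argument.
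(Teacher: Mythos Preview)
Your overall strategy---classify the possible trunk shapes and then count how the $k$ vertices of type $Q$ can be attached---is the same as the paper's. However, your identification of the $2^{k-2}$ factor is wrong and would derail the count. A $Q$-vertex has valency $1$ in $\varGamma^*$, so it is attached \emph{directly} to a vertex of the trunk; there is no ``middle vertex or not'' choice on the pendant edge to a $Q$, and there is no ``two choices pinned down by contractibility'' mechanism of the kind you describe. The powers of $2$ come instead from P3 applied to the trunk itself: each $Q$ attached to the trunk creates a new special vertex \emph{on the trunk}, and each segment of the trunk between two consecutive special vertices may or may not contain a single even vertex. Thus an arm of the trunk carrying $\alpha$ interior $Q$'s (plus a forced $Q$ at its tip) contributes a factor $2^{\alpha+1}$, and a further global factor of $2$ comes from the parity of the one trunk vertex whose parity is not forced by P2.

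The paper's decomposition is also finer than your star/caterpillar dichotomy: it fixes four explicit trunk types A--D (according to whether the three non-compact components sit all on one vertex, two together and one separate, one at a branch vertex, or each on its own arm of a tripod) and, for each type, sums over the ways to distribute the $Q$'s among the arms. For instance, in type D with a $Q$ forced at each of the three outer tips by Lemma~\ref{extremal}, one sums $2^{\alpha+1}2^{\beta+1}2^{\gamma+1}$ over $\alpha+\beta+\gamma=k-3$; this composition count is the origin of the cubic in $k$, not a ``three independent slots'' heuristic. Your plan, as written, mislocates the binary choices and is too schematic to recover the precise polynomial; carrying it out would require redoing the explicit case analysis along the paper's lines.
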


\begin{proof}
Consider the topological structure of the minimal subtree connecting all non-compact components in $\varGamma$. We call this the \textit{trunk} of $\varGamma$.

In the tribranched case, there are only four possible shapes for the trunk of a standard form:

\begin{figure}[H]
\begin{subfigure}[h]{0.49\textwidth}
\centering
\includegraphics[scale=0.25]{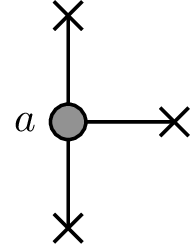}
\caption*{Type A}
\end{subfigure}
\begin{subfigure}[h]{0.49\textwidth}
\centering
\includegraphics[scale=0.25]{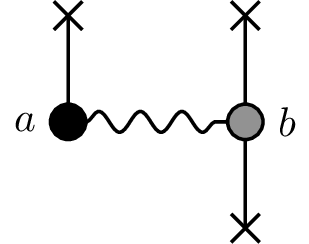}
\caption*{Type B}
\end{subfigure}

\begin{subfigure}[h]{0.49\textwidth}
\centering
\includegraphics[scale=0.25]{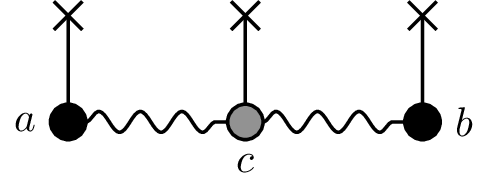}
\caption*{Type C}
\end{subfigure}
\begin{subfigure}[h]{0.49\textwidth}
\centering
\includegraphics[scale=0.25]{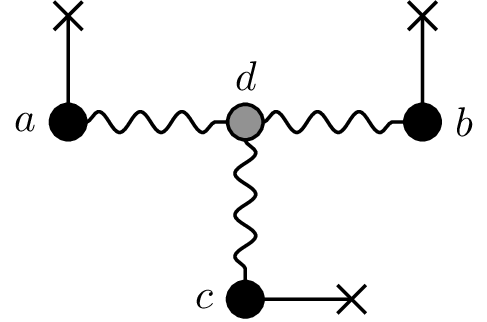}
\caption*{Type D}
\end{subfigure}
\end{figure}

\noindent where $\times$ represents a non-compact component, the grey vertices can be either even or odd exceptional curves, and waved edges between two vertices represent finite chains of even curves connecting them.\\

For $\mu' = k$, standard forms of $\varGamma$ can be obtained by adding $k$ vertices of type $Q$ to the trunks above.

Observe that, in order to avoid not smoothly contractible configurations, two vertices of type $Q$ cannot be attached to the same vertex. This implies that graphs of type A exist only for $k = 0$ and $k=1$.\\

In what follows assume $k>2$ for simplicity. The formula still holds for $k=0,1,2$, as shown by the computations in the next section.\\

\textit{Type B}. By Lemma \ref{extremal}, the vertex $a$ cannot be extremal, so there must be a vertex of type $Q$ attached to it.

If another vertex of type $Q$ is attached to $b$, the remaining $k-2$ vertices of type $Q$ must be placed in the middle. The segment between each pair of special vertices (if it exists) is at most one even vertex, so $2^{k-1}$ different configurations are obtained this way.

Similarly, if $b$ is extremal, then $k-1$ vertices of type $Q$ are attached to the edge of the trunk, which gives $2^k$ configurations.

Furthermore, $b$ can be either odd or even, so there are
\[
2 (2^{k-1} + 2^k) = 3 \cdot 2^k 
\]
configurations of type B.\\

\textit{Type C}. If vertices of type $Q$ are attached to both $a$ and $b$, there are
\[
\sum_{\alpha + \beta =k-2}(2^{\alpha+1}2^{\beta+1}) + \sum_{\alpha + \beta = k-3}(2^{\alpha+1}2^{\beta+1}) =
\]
\[
=(k-2)(k-1) 2^{k-1} + (k-2)(k-3) 2^{k-2} = (3k^2 -11k +10) 2^{k-2}
\]
configurations, where the two terms in the sum count separately whether there is a vertex of type $Q$ attached to $c$ or not.

On the other hand, if $b$ is an extremal vertex, then, by Lemma \ref{extremal}, the right edge is empty. Since $k>2$, there must be one vertex of type $Q$ attached to $a$ and the other $k-1$ to the left edge.
This gives
\[
2^{k-1} + 2^{k-2} = 3 \cdot 2^{k-2}
\]
new configurations.

Since $c$ can be either odd or even, the total number of configurations of type C is
\[
2 [(3k^2 -11k +10)2^{k-2} + 3 \cdot 2^{k-2}] = (3k^2 -11k +13) 2^{k-1}.
\]

\textit{Type D}. First consider the case in which $a$, $b$ and $c$ each have a vertex of type $Q$ attached to them. This gives
\[
\sum_{\alpha + \beta + \gamma = k-3}(2^{\alpha+1}2^{\beta+1}2^{\gamma+1}) + \sum_{\alpha + \beta + \gamma = k-4}(2^{\alpha+1}2^{\beta+1}2^{\gamma+1}) =
\]
\[
= \frac{1}{3}(k-3)(k-2)(k-1) 2^{k-1} + \frac{1}{3}(k-4)(k-3)(k-2) 2^{k-2}
\]
configurations.

Next, assume that $c$ is extremal. By Lemma \ref{extremal}, this means that the downward edge is empty and there cannot be vertices of type $Q$ attached to $d$, else not smoothly contractible configurations arise. So we have
\[
\sum_{\alpha + \beta = k-2}(2^{\alpha+1}2^{\beta+1}) = (k-2)(k-1)2^{k-1}
\]
configurations.

Observe that if two of the vertices in the trunk are extremal, Lemma \ref{extremal} implies that the corresponding edges are empty, which leads to configurations that are not smoothly contractible. So the previous two cases cover all possible configurations.

Since there are two colour choices for the vertex $d$, in total there are
\[
(k-2)(k^2 -3k +4)2^{k-1}
\]
configurations of type D.\\

Adding the numbers obtained for each type, we get the upper bound 
\[
(k^3 -2k^2 -k +11) 2^{k-2}.
\]
\end{proof}

\textit{Remark.} The above formula is merely an upper estimate of the number of standard forms for $\mu'=k$. In fact, the number includes some not smoothly contractible configurations as well as pairs of configurations which are blow-analytically equivalent (in the pair, one configuration is a standard form, to which the other can be reduced).

\section{Explicit classification of tribranched germs}

In this section, we restrict our attention to tribranched germs of plane curves and determine explicitly a standard form for each blow-analytic equivalence class, for low values of the invariant $\mu'$.

\begin{proposition}[\cite{Kob}] A germ of a tribranched plane curve with $\mu'=~0$ is blow-analytically equivalent to one of the following:
\begin{figure}[H]
\begin{subfigure}{0.49\textwidth}
\centering
\includegraphics[scale=0.25]{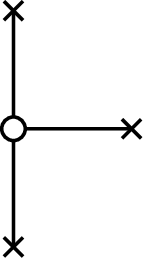}
\caption*{$(\{xy(x-y)=0\},0)$}
\end{subfigure}
\begin{subfigure}{0.49\textwidth}
\centering
\includegraphics[scale=0.25]{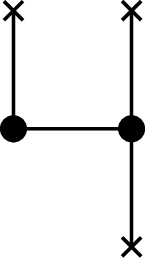}
\caption*{$(\{xy(x-y^2)=0\},0)$}
\end{subfigure}
\end{figure}
\end{proposition}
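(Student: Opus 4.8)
The plan is to re-establish this via the standard-form machinery. Given a tribranched germ $(C,0)$ with $\mu'=0$, I would take a good resolution, form its dual graph $\varGamma$, and reduce it to a standard form. By P4 this standard form has no vertex of type $Q$; by Proposition \ref{branches} it has at most $\mu'+n=3$ extremal vertices; and by Lemma \ref{extremal}, since there are no $Q$-vertices, every extremal vertex is adjacent to a non-compact component. Hence the standard form is a tree with at most three leaves, the three branches being distributed among those leaves.

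The next step is to enumerate such standard forms. If all three branches sit on a single vertex, that vertex is the whole graph; its $1\times 1$ intersection matrix has determinant equal to its weight, so smooth contractibility forces it to be odd, and no reduction move applies: this is the Type A standard form. If two branches sit on one leaf and one on another, the tree is a path; Lemma \ref{extremal} applied to the leaf carrying a single branch forces the connecting chain to be empty, so the graph is a two-vertex path $a-b$ with two $\times$ on $a$ and one $\times$ on $b$, where $b$ is even by P1 and $a$ is special: this is the Type B standard form. In every remaining case each leaf carries exactly one branch; the same application of Lemma \ref{extremal} then forces all connecting chains to be empty, and the resulting small graphs — the three-path $v_1-v_2-v_3$ with both end-weights even, the three-legged star, and so on (the reduced forms of trunk types C and D) — all have a singular $\mathbb{Z}/2\mathbb{Z}$-intersection matrix, hence are not smoothly contractible and cannot occur.

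I would then realize the two surviving standard forms by explicit curves. A single blow-up of $\{xy(x-y)=0\}$ replaces it by a $(-1)$-curve meeting the three strict transforms at three distinct points, which is the Type A standard form. Two blow-ups of $\{xy(x-y^2)=0\}$ — the second one resolving the tangency of the branches $\{x=0\}$ and $\{x=y^2\}$ — produce an even exceptional vertex $E_1$ (self-intersection $-2$) joined to an odd one $E_2$ (self-intersection $-1$), with the transverse branch $\{y=0\}$ meeting $E_1$ and the two formerly tangent branches meeting $E_2$ at distinct points; nothing reduces, so this is the Type B standard form. Both germs have $\mu'=0$ and their standard forms are non-isomorphic trees; since blow-analytically equivalent germs have isomorphic dual graphs, the two germs are inequivalent, and by the enumeration every tribranched germ with $\mu'=0$ is equivalent to one of them.

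The main obstacle will be making the enumeration watertight — in particular pinning down the parity of the special vertex $a$ in the Type B graph. A priori both parities satisfy P1--P4 and smooth contractibility, so one must show that the ``$a$ even'' configuration either never arises as the standard form of an actual embedded curve germ or else is blow-analytically equivalent to the listed one, so that the number of standard forms, and hence of blow-analytic classes, with $\mu'=0$ is exactly two; this is presumably also the crux of the original treatment in \cite{Kob}.
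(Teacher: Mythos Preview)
The paper does not prove this proposition at all: it is quoted from \cite{Kob} without argument, and the paper's own methods are only deployed from $\mu'=1$ onward. Your proposal is therefore not being compared against a proof in the text, but it does faithfully transplant the method of Propositions~\ref{k1} and~\ref{k2} to the $\mu'=0$ case, and the enumeration you give (Types~A and~B survive; the reduced Type~C path and Type~D star have singular $\mathbb{Z}/2\mathbb{Z}$-intersection matrix) is correct.

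The gap you flag --- the parity of the special vertex $a$ in the Type~B graph --- is genuine but closes by a single move of exactly the sort the paper invokes without comment in the $\mu'=1$ proof (``$A_1$, $B_3$ and $C_1$ are blow-analytically equivalent to $A_2$, $B_4$ and $C_2$''). Starting from the ``$a$ even'' configuration $a\,$--$\,b$ (both even, two branches on $a$, one on $b$), blow up at the point $a\cap b$: this inserts an odd vertex $e$ and flips both parities, giving $a(\text{odd})\,$--$\,e(\text{odd})\,$--$\,b(\text{odd})$. Now $b$ is odd with valency~$2$ in $\varGamma^*$ (one edge to $e$, one to its branch), so C2 contracts it; its branch lands on $e$, whose parity flips to even. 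The result is $a(\text{odd})\,$--$\,e(\text{even})$ with two branches on $a$ and one on $e$, i.e.\ the ``$a$ odd'' form realised by $\{xy(x-y^2)=0\}$. With this identification your enumeration yields exactly two standard forms, and your argument that they are blow-analytically distinct (non-isomorphic trees) completes the proof.
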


The following results provide a classification of the dual graphs of good resolutions with $\mu'=1, 2$.\\

Up to this point, we are not able to prove in general the uniqueness of standard forms in a given blow-analytic equivalence class. In Propositions \ref{k1} and \ref{k2}, we use the invariants $\mu_I$ to show that the standard forms listed in the statements are in fact blow-analytically distinct.\\

It may happen that two blow-analytically non-equivalent germs share the same graph standard form. However, to each standard form correspond at most a finite number of blow-analytic equivalence classes of plane curve germs, so we feel that a classification of the dual graphs is still a strong one from the blow-analytic point of view.

\begin{proposition}\label{k1}
The dual graph of any good resolution of a tribranched plane curve germ with $\mu' = 1$ is blow-analytically equivalent to exactly one of the following standard forms:
\begin{figure}[H]
\centering
\begin{subfigure}{0.49\textwidth}
\centering
\includegraphics[scale=0.25]{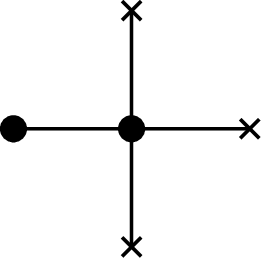}
\caption*{$A_2: (\{y(y-x^2)(y+x^2)=0\},0)$}
\end{subfigure}
\begin{subfigure}{0.49\textwidth}
\centering
\includegraphics[scale=0.25]{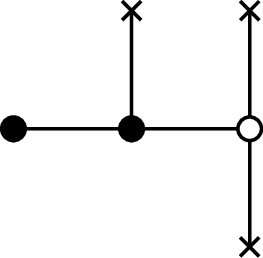}
\caption*{$B_1: (\{x(y - x)(y^2 - x^3) = 0\}, 0)$}
\end{subfigure}
\end{figure}

\begin{figure}[H]
\begin{subfigure}{0.49\textwidth}
\centering
\includegraphics[scale=0.25]{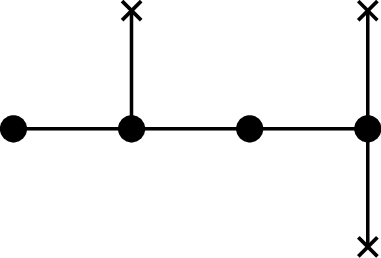}
\caption*{$B_4: (\{y(y - x^2)(y - x^4) = 0\}, 0)$}
\end{subfigure}
\begin{subfigure}{0.49\textwidth}
\centering
\includegraphics[scale=0.25]{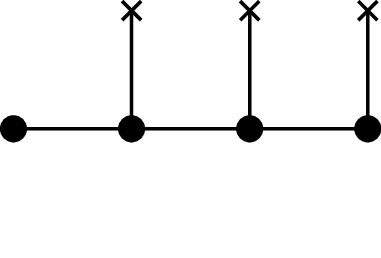}
\caption*{$C_2: (\{y(y - x^2)(y^2 - x^5) = 0\}, 0)$}
\end{subfigure}
\end{figure}
\end{proposition}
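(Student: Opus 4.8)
The plan is to enumerate all standard forms of $\varGamma$ for $n=3$ and $\mu'=1$ using the trunk classification of the previous section, then to discard the redundant and the non-contractible ones, and finally to separate the survivors by means of the invariants $\mu_I$.

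First I would fix the trunk to be of one of the types A, B, C, D and attach the single vertex of type $Q$ demanded by P4. Because $\mu'=1$, all the summations that appear in the proof of the upper bound collapse, so only a handful of graphs arise: Lemma \ref{extremal} together with P1--P3 pins down where the $Q$ may hang (equivalently, which grey vertex is special and, by P2, even), which of the remaining grey vertices are forced to be even, and which waved edges must shrink to at most one even vertex. For a type A trunk the three non-compact components are already the only admissible leaves besides $Q$, which forces $Q$ onto the central special vertex and yields the graph $A_2$; running the same bookkeeping through types B, C and D produces a short explicit list of candidate trees (Proposition \ref{branches} bounding the number of extremities throughout).

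Second, from this list I would remove the configurations that fail to be smoothly contractible --- detected via the determinant criterion of \cite{Kuo}, in practice via a forbidden sub-configuration such as two vertices of type $Q$ on one vertex reappearing after an application of M1 --- and those that reduce to an earlier candidate under C1, C2, C3 or M1. The claim is that exactly four trees survive, namely $A_2$, $B_1$, $B_4$ and $C_2$. To see that each is genuinely realised I would take the explicit germs in the statement, e.g. $\{y(y-x^2)(y+x^2)=0\}$, blow up until the total transform is simple normal crossing, and check that the standard-form algorithm of Section 3 carries the resulting dual graph to the listed one; this is a direct computation with successive blow-ups.

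Finally, to prove the four forms are pairwise blow-analytically inequivalent --- the ``exactly one'' in the statement --- I would invoke the partition-refined version of Lemma \ref{paths}: for each standard form compute $\mu_I(\varGamma)$ on every partition $I$ of $\{1,2,3\}$, that is, on $\{\{1\},\{2\},\{3\}\}$, on the three partitions of shape ``$2+1$'', and on $\{1,2,3\}$ (the last giving $\mu'=1$ in all cases). Since each $\mu_I$ is a blow-analytic invariant, it suffices to exhibit for every pair among $A_2,B_1,B_4,C_2$ a partition on which the two values differ, which a short table does. The main obstacle is the completeness and non-redundancy of the enumeration in the first two steps: writing down the trunk shapes is easy, but one must carefully combine the even/odd choice for each grey vertex, the length-at-most-one constraint on waved edges, the parity flip forced by M1 adjacent to $Q$, and smooth contractibility, and then recognise which of the resulting trees are isomorphic or mutually reducible. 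Once the four graphs are in hand the $\mu_I$ computation is mechanical.
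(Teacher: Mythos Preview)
Your enumeration strategy matches the paper's almost exactly: list candidate standard forms by trunk type, discard the ones with $\mu\neq 0$ (not smoothly contractible), identify the remaining redundancies ($A_1\sim A_2$, $B_3\sim B_4$, $C_1\sim C_2$ in the paper's labelling), and arrive at the four survivors. That part is fine.

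The gap is in the last step. You propose to separate $A_2,B_1,B_4,C_2$ using only the numbers $\mu_I(\varGamma)$ over all partitions $I$ of $\{1,2,3\}$, asserting that ``a short table does'' the job. It does not. For the finest partition one gets $\mu=0$ in all four cases (they are smoothly contractible), and for the coarsest one gets $\mu'=1$ by hypothesis. For the three $2{+}1$ partitions the paper computes the triplets $\{\mu(\varGamma_{12}),\mu(\varGamma_{13}),\mu(\varGamma_{23})\}$ and finds
\[
A_2:\{1,1,1\},\qquad B_1:\{0,1,1\},\qquad B_4:\{1,1,1\},\qquad C_2:\{0,1,2\}.
\]
So the coranks alone cannot distinguish $A_2$ from $B_4$: every $\mu_I$ agrees on this pair. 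The paper closes this gap by invoking the \emph{full} content of Lemma~\ref{paths}, namely the bijection $\hat h:\Delta^*_{ij}(C)\to\Delta^*_{\bar h(i)\bar h(j)}(C')$ between the sets of non-contractible components themselves, not merely the equality of their total corank. Concretely, for $A_2$ every $\Delta^*_{ij}$ consists of two pieces (an isolated even vertex and an isolated non-compact component), whereas for $B_4$ one of the $\Delta^*_{ij}$ is a single connected graph; no bijection of the required kind exists, hence no blow-analytic homeomorphism. Your plan needs this finer invariant --- the isomorphism types of the pieces in $\Delta^*_{ij}$ --- to finish.
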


\begin{proof}
Consider a tribranched germ of plane curve $(C,0)$ and assume $\mu'=~1$. Take a good resolution of $(C,0)$, construct its dual graph $\varGamma$ and reduce it to a standard from as described in section two.

Since $(C,0)$ is tribranched, the trunk of the reduced $\varGamma$ must be of type $A$, $B$, $C$ or $D$. Furthermore, the assumption $\mu'=1$ implies that $\varGamma$ contains exactly one vertex of type $Q$.

Draw all configurations with $\mu'=1$ for each type, remembering that a segment between two special vertices is at most one even vertex and using Lemma \ref{extremal} for the extremal vertices. Then, $\varGamma$ must be blow-analytically equivalent to one of the following configurations:

\begin{figure}[H]
%\centering
\begin{subfigure}{0.49\textwidth}
\centering
\includegraphics[scale=0.18]{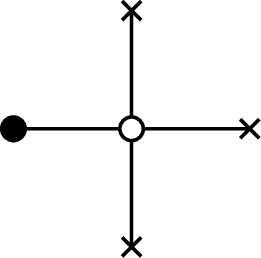}
\caption*{$A_1$}
\end{subfigure}
\begin{subfigure}{0.49\textwidth}
\centering
\includegraphics[scale=0.18]{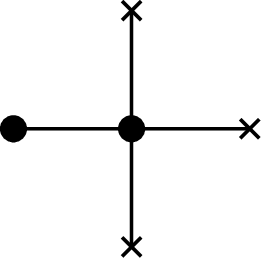}
\caption*{$A_2$}
\end{subfigure}

\begin{subfigure}{0.32\textwidth}
\centering
\includegraphics[scale=0.18]{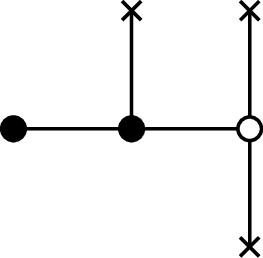}
\caption*{$B_1$}
\end{subfigure}
\begin{subfigure}{0.32\textwidth}
\centering
\includegraphics[scale=0.18]{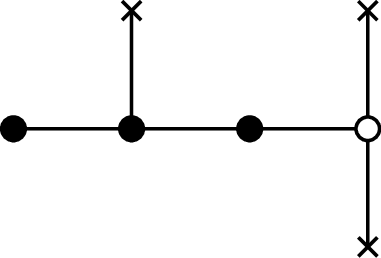}
\caption*{$B_3$}
\end{subfigure}
\begin{subfigure}{0.32\textwidth}
\centering
\includegraphics[scale=0.18]{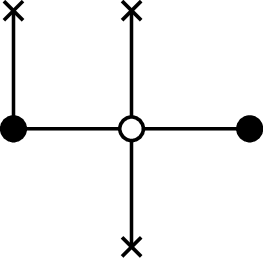}
\caption*{$B_5\; (\mu\neq 0)$}
\end{subfigure}

\begin{subfigure}{0.32\textwidth}
\centering
\includegraphics[scale=0.18]{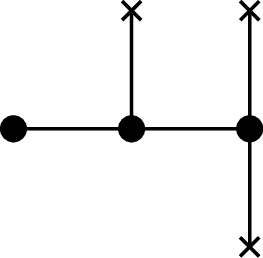}
\caption*{$B_2\; (\mu\neq 0)$}
\end{subfigure}
\begin{subfigure}{0.32\textwidth}
\centering
\includegraphics[scale=0.18]{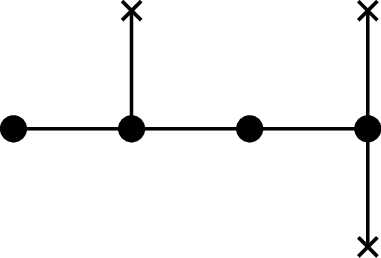}
\caption*{$B_4$}
\end{subfigure}
\begin{subfigure}{0.32\textwidth}
\centering
\includegraphics[scale=0.18]{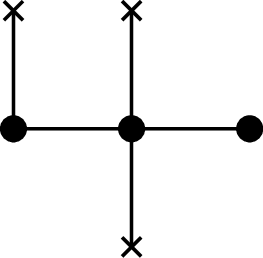}
\caption*{$B_6\; (\mu\neq 0)$}
\end{subfigure}

\begin{subfigure}{0.49\textwidth}
\centering
\includegraphics[scale=0.18]{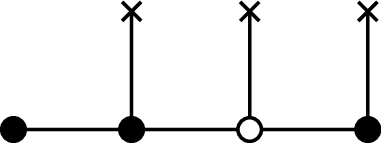}
\caption*{$C_1$}
\end{subfigure}
\begin{subfigure}{0.49\textwidth}
\centering
\includegraphics[scale=0.18]{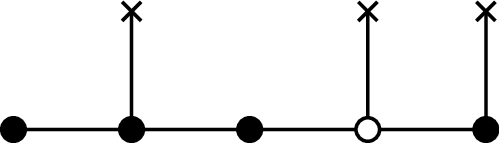}
\caption*{$C_3\; (\mu\neq 0)$}
\end{subfigure}

\begin{subfigure}{0.49\textwidth}
\centering
\includegraphics[scale=0.18]{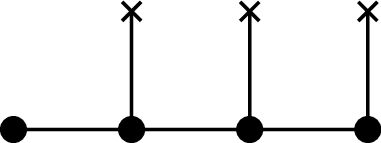}
\caption*{$C_2$}
\end{subfigure}
\begin{subfigure}{0.49\textwidth}
\centering
\includegraphics[scale=0.18]{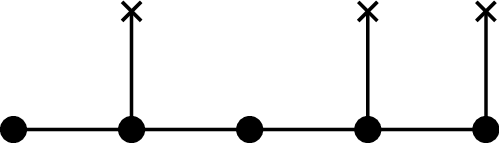}
\caption*{$C_4\; (\mu\neq 0)$}
\end{subfigure}
\end{figure}

Observe that all configurations of type $D$ with $\mu'=1$ are not smoothly contractible, thus cannot be the dual graph of a resolution. For the same reason, we also cross out of the list all configurations with $\mu\neq 0$.

For the remaining configurations, $A_1$, $B_3$ and $C_1$ are blow-analytically equivalent to $A_2$, $B_4$ and $C_2$ respectively. Only $4$ graphs are left and they are those of the statement.

Finally, the equation of a representative for each configuration can be found by contracting all exceptional curves (possibly performing blow-ups if no odd curves are present).

To show that the four configurations are blow-analytically distinct, label $\{1, 2, 3\}$ the vertices corresponding to the three non-compact components and consider the triplets $\{\mu(\varGamma_{12}), \mu(\varGamma_{13}), \mu(\varGamma_{23}) \}$, which are blow-analytic invariants by Lemma \ref{paths}. We have:
\begin{gather*}
A_2, B_4: \{1,1,1\} \qquad B_1:\{0,1,1\} \qquad C_2: \{0,1,2\}.
\end{gather*}
Since the coranks are not sufficient to distinguish between $A_2$ and $B_4$, we look explicitly at the sets $\Delta^*_{ij}$ for $1 \leq i < j \leq 3$:
\begin{align*}
\Delta^*_{12}(A_2) = \Delta^*_{13}(A_2) & = \Delta^*_{23}(A_2)= \lbrace \bullet, \times \rbrace ; \\
\Delta^*_{12}(B_4) = \Delta^*_{13}(B_4) & = \lbrace \bullet, \times \rbrace , \qquad \Delta^*_{23}(B_4) = \lbrace \includegraphics[scale=0.14]{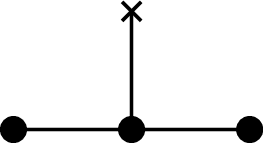} \rbrace.
\end{align*}

There is no bijection between $\Delta^*_{ij}(A_2)$ and $\Delta^*_{23}(B_4)$ for any choice of $ij$, so we conclude that no blow-analytic homeomorphism exists between plane curve germs having good resolutions equivalent to $A_2$ and $B_4$ respectively.
\end{proof}

\begin{proposition}\label{k2}
The dual graph of any good resolution of a tribranched plane curve germ with $\mu' = 2$ is blow-analytically equivalent to exactly one of the following standard forms:
\end{proposition}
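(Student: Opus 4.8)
The plan is to run the argument of Proposition \ref{k1} with $\mu'=2$ in place of $\mu'=1$. Let $(C,0)$ be a tribranched plane curve germ with $\mu'=2$; take a good resolution of the embedded singularity, form its dual graph $\varGamma$, and reduce $\varGamma$ to a standard form by the operations C1--C3 and M1. By properties P1--P4 the reduced graph then has exactly two vertices of type $Q$, all its non-special vertices are even, a segment joining two special vertices is at most one even vertex, and its trunk is of type A, B, C or D. Since two vertices of type $Q$ attached to a common vertex destroy smooth contractibility, type A cannot carry two vertices of type $Q$ --- exactly as observed in the previous section --- so the trunk of $\varGamma$ is of type B, C or D.

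For each of these three trunk types I would enumerate exhaustively the standard configurations carrying exactly two vertices of type $Q$: the two $Q$'s are distributed over the admissible sites of the trunk (its special vertices and the interiors of its chains, each placement possibly creating a new special vertex), the grey trunk vertices are coloured odd or even, and the placement is constrained by P3 together with Lemma \ref{extremal}, which dictates which trunk vertices may be extremal. This produces a finite list of candidates --- at most $9$ of them, by the upper bound established in the previous section. From the list I strike out every configuration that is not smoothly contractible, that is, whose $\mathbb{Z}/2\mathbb{Z}$-intersection matrix has nonzero corank ($\mu\neq 0$) --- in particular several of the type-D layouts --- just as in the $\mu'=1$ case.

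Among the surviving configurations, several are blow-analytically equivalent --- related by blow-ups and blow-downs that the reduction algorithm does not perform, for instance by passing between the two ``even extremal vertex adjacent to one non-compact component'' variants of Lemma \ref{extremal}, or by exchanging two chain lengths --- and I keep a single representative from each such class; these representatives are the standard forms in the statement. For each of them a defining equation of a representative germ is then obtained by contracting all exceptional curves down to $(\RR^2,0)$, performing an auxiliary blow-up first when the configuration contains no odd curve with which to begin the contraction.

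It remains to show that the listed standard forms are pairwise blow-analytically inequivalent. Labelling the three non-compact components $1,2,3$, I first compute the triple $\{\mu(\varGamma_{12}),\mu(\varGamma_{13}),\mu(\varGamma_{23})\}$, which is a blow-analytic invariant by Lemma \ref{paths}; this already separates most pairs. For a pair on which the triple agrees I compare the sets $\Delta^*_{ij}$ of non-contractible components cut off by the paths $\gamma^*_{ij}$, and, should even these match, the invariants $\mu_I$ attached to the coarser partitions $I$ of $\{1,2,3\}$, producing in each case an obstruction to the existence of a bijection $\hat h$ of the kind furnished by Lemma \ref{paths}. The step I expect to be the main obstacle is precisely this last one: with $\mu'=2$ there is more room for two genuinely distinct standard forms to share all the coarse coranks, so separating them calls for a careful, case-by-case inspection of the $\Delta^*_{ij}$ data; the combinatorial enumeration in the earlier steps, while lengthier than for $\mu'=1$, is routine.
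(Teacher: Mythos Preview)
Your plan is correct and matches the paper's proof almost exactly: enumerate the standard configurations of trunk types B, C, D carrying two $Q$-vertices (type A is ruled out as you say), strike those with $\mu\neq 0$, collapse the blow-analytically equivalent pairs, and then separate the seven survivors first by the triple $\{\mu(\varGamma_{12}),\mu(\varGamma_{13}),\mu(\varGamma_{23})\}$ and, for the three sharing $\{1,2,2\}$ (namely $B_2$, $B_{12}$, $C_4$), by direct inspection of the $\Delta^*_{ij}$. One small misreading: the bound $9$ from the previous section is an upper bound on the \emph{final} number of standard forms, not on the raw candidate list --- the paper's brute-force enumeration actually produces thirty candidates (twelve of type B, twelve of type C, six of type D) before filtering --- and the coarser invariants $\mu_I$ you hold in reserve turn out to be unnecessary.
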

\begin{figure}[H]
\begin{subfigure}{0.32\textwidth}
\centering
\includegraphics[scale=0.24]{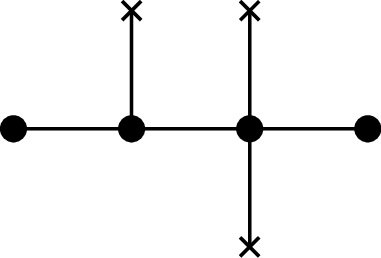}
\caption*{$B_2$}
\end{subfigure}
\begin{subfigure}{0.32\textwidth}
\centering
\includegraphics[scale=0.24]{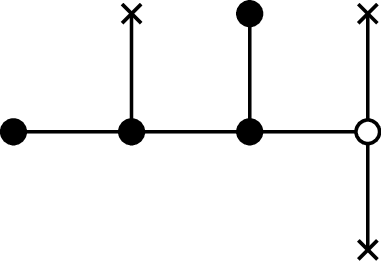}
\caption*{$B_5$}
\end{subfigure}
\begin{subfigure}{0.32\textwidth}
\centering
\includegraphics[scale=0.24]{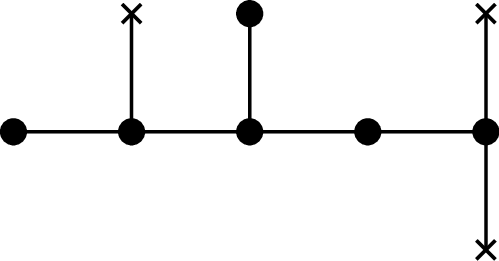}
\caption*{$B_{12}$}
\end{subfigure}
\end{figure}

\begin{figure}[H]
\begin{subfigure}{0.49\textwidth}
\centering
\includegraphics[scale=0.24]{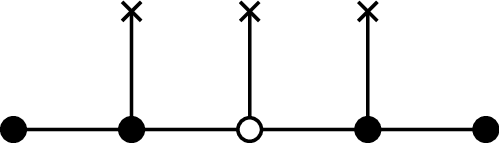}
\caption*{$C_1$}
\end{subfigure}
\begin{subfigure}{0.49\textwidth}
\centering
\includegraphics[scale=0.24]{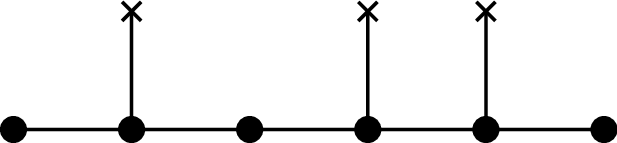}
\caption*{$C_4$}
\end{subfigure}

\begin{subfigure}{0.49\textwidth}
\centering
\includegraphics[scale=0.24]{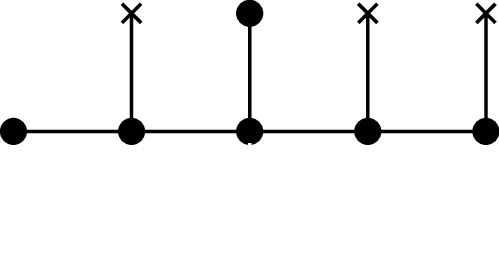}
\caption*{$C_8$}
\end{subfigure}
\begin{subfigure}{0.49\textwidth}
\centering
\includegraphics[scale=0.24]{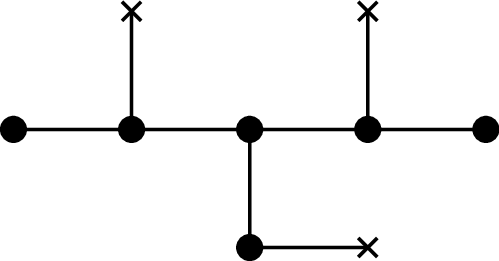}
\caption*{$D_2$}
\end{subfigure}
\end{figure}

\begin{proof}
The proof is similar to that of the previous proposition. For each type, draw all reduced configurations in which exactly two vertices of type $Q$ appear. The dual graph of any resolution of a tribranched singularity with $\mu'=2$ is blow-analytically equivalent to one of the graphs in the list below. Again, notice that there are no smoothly contractible configurations of type $A$.

\begin{figure}[H] %B Type part 1
\centering
\begin{subfigure}{0.32\textwidth}
\centering
\includegraphics[scale=0.18]{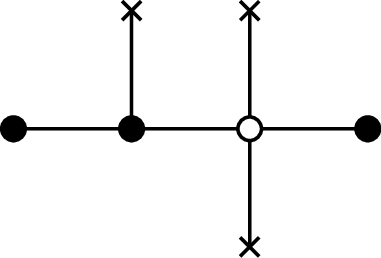}
\caption*{$B_1$}
\end{subfigure}
\begin{subfigure}{0.32\textwidth}
\centering
\includegraphics[scale=0.18]{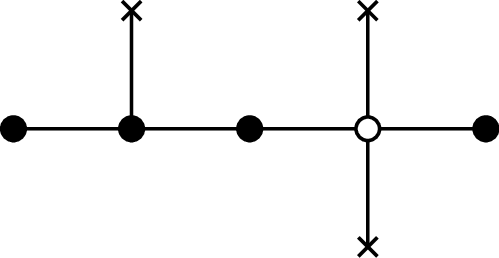}
\caption*{$B_3 \; (\mu\neq 0)$}
\end{subfigure}
\begin{subfigure}{0.32\textwidth}
\centering
\includegraphics[scale=0.18]{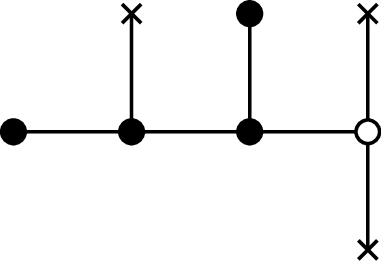}
\caption*{$B_5$}
\end{subfigure}

\begin{subfigure}{0.32\textwidth}
\centering
\includegraphics[scale=0.18]{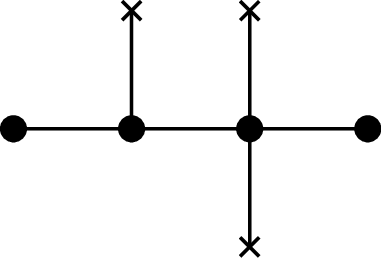}
\caption*{$B_2$}
\end{subfigure}
\begin{subfigure}{0.32\textwidth}
\centering
\includegraphics[scale=0.18]{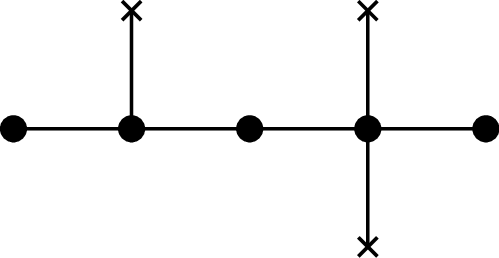}
\caption*{$B_4 \; (\mu\neq 0)$}
\end{subfigure}
\begin{subfigure}{0.32\textwidth}
\centering
\includegraphics[scale=0.18]{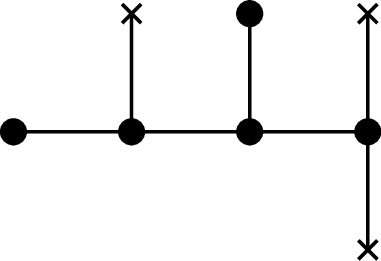}
\caption*{$B_6 \; (\mu\neq 0)$}
\end{subfigure}
\end{figure}

\begin{figure}[H] %B Type part 2
\centering
\begin{subfigure}{0.325\textwidth}
\centering
\includegraphics[scale=0.18]{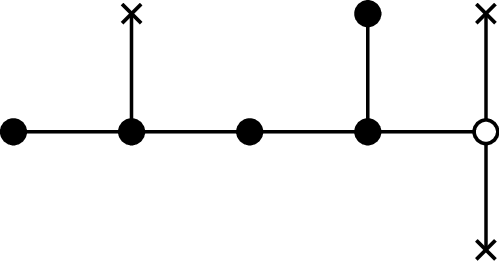}
\caption*{$B_7 \; (\mu\neq 0)$}
\end{subfigure}
\begin{subfigure}{0.325\textwidth}
\centering
\includegraphics[scale=0.18]{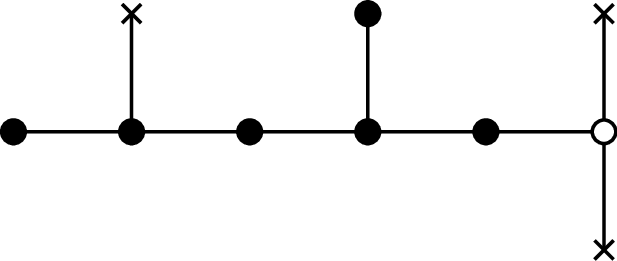}
\caption*{$B_9 \; (\mu\neq 0)$}
\end{subfigure}
\begin{subfigure}{0.325\textwidth}
\centering
\includegraphics[scale=0.18]{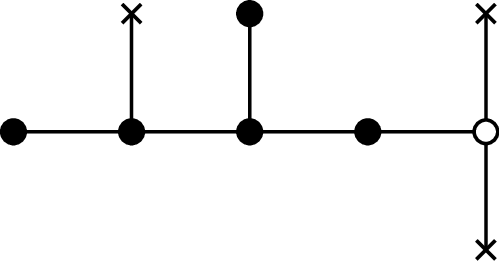}
\caption*{$B_{11}$}
\end{subfigure}

\begin{subfigure}{0.325\textwidth}
\centering
\includegraphics[scale=0.18]{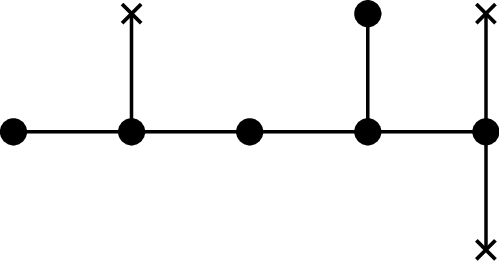}
\caption*{$B_8 \; (\mu\neq 0)$}
\end{subfigure}
\begin{subfigure}{0.325\textwidth}
\centering
\includegraphics[scale=0.18]{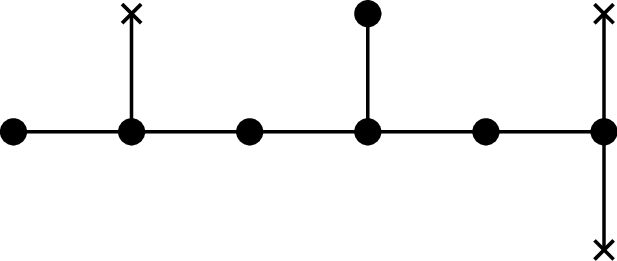}
\caption*{$B_{10} \; (\mu\neq 0)$}
\end{subfigure}
\begin{subfigure}{0.325\textwidth}
\centering
\includegraphics[scale=0.18]{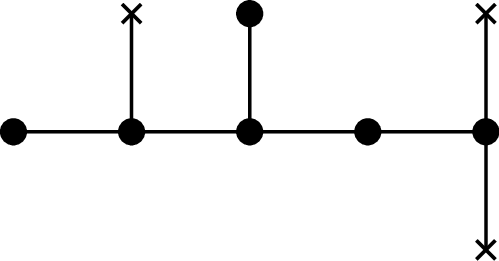}
\caption*{$B_{12}$}
\end{subfigure}
\end{figure}

\begin{figure}[H] %C Type part 1
\begin{subfigure}{0.25\textwidth}
\centering
\includegraphics[scale=0.18]{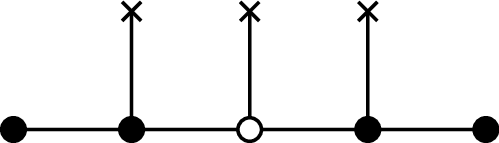}
\caption*{$C_1$}
\end{subfigure}
\begin{subfigure}{0.35\textwidth}
\centering
\includegraphics[scale=0.18]{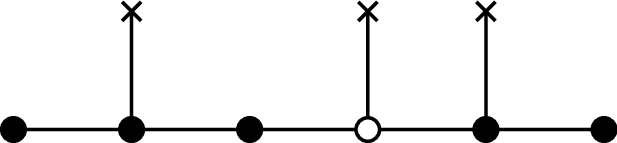}
\caption*{$C_3$}
\end{subfigure}
\begin{subfigure}{0.35\textwidth}
\centering
\includegraphics[scale=0.18]{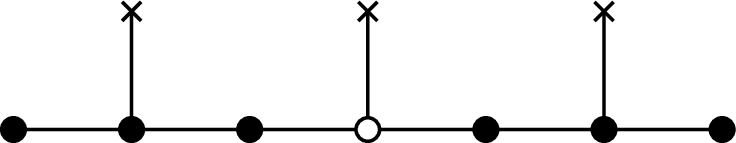}
\caption*{$C_5 \; (\mu\neq 0)$}
\end{subfigure}

\begin{subfigure}{0.25\textwidth}
\centering
\includegraphics[scale=0.18]{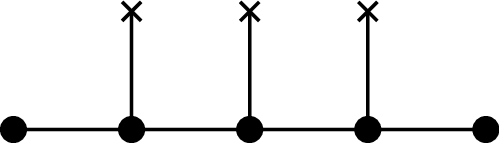}
\caption*{$C_2 \; (\mu\neq 0)$}
\end{subfigure}
\begin{subfigure}{0.35\textwidth}
\centering
\includegraphics[scale=0.18]{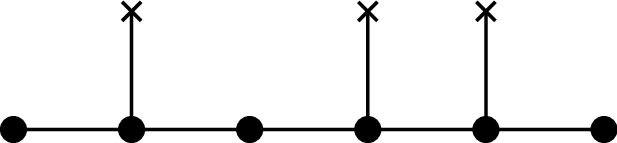}
\caption*{$C_4$}
\end{subfigure}
\begin{subfigure}{0.35\textwidth}
\centering
\includegraphics[scale=0.18]{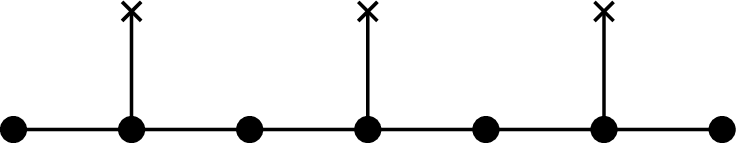}
\caption*{$C_6 \; (\mu\neq 0)$}
\end{subfigure}
\end{figure}

\begin{figure}[H] %C Type part 2
\begin{subfigure}{0.25\textwidth}
\centering
\includegraphics[scale=0.18]{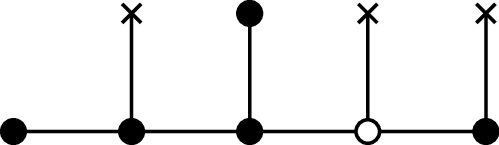}
\caption*{$C_7$}
\end{subfigure}
\begin{subfigure}{0.35\textwidth}
\centering
\includegraphics[scale=0.18]{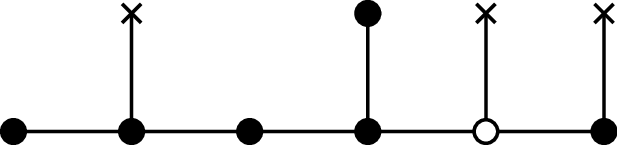}
\caption*{$C_9 \; (\mu\neq 0)$}
\end{subfigure}
\begin{subfigure}{0.35\textwidth}
\centering
\includegraphics[scale=0.18]{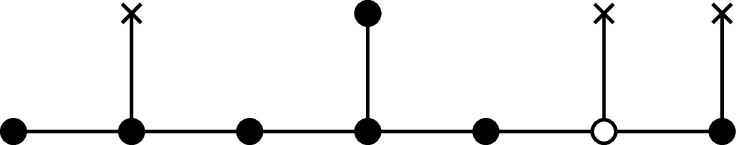}
\caption*{$C_{11} \; (\mu\neq 0)$}
\end{subfigure}

\begin{subfigure}{0.25\textwidth}
\centering
\includegraphics[scale=0.18]{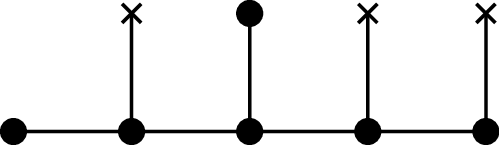}
\caption*{$C_8$}
\end{subfigure}
\begin{subfigure}{0.35\textwidth}
\centering
\includegraphics[scale=0.18]{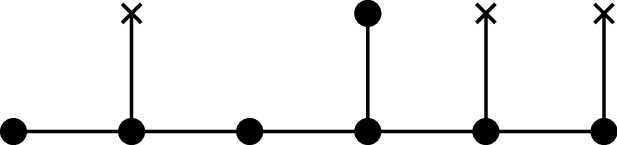}
\caption*{$C_{10} \; (\mu\neq 0)$}
\end{subfigure}
\begin{subfigure}{0.35\textwidth}
\centering
\includegraphics[scale=0.18]{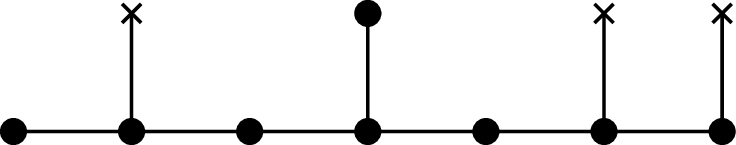}
\caption*{$C_{12} \; (\mu\neq 0)$}
\end{subfigure}
\end{figure}

\begin{figure}[H] %D Type
\begin{subfigure}{0.25\textwidth}
\centering
\includegraphics[scale=0.18]{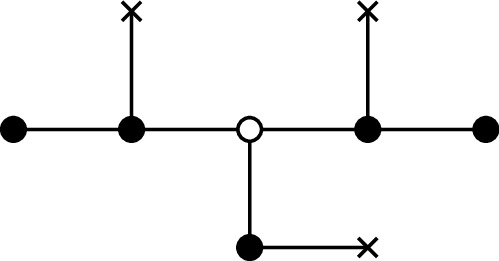}
\caption*{$D_1$}
\end{subfigure}
\begin{subfigure}{0.35\textwidth}
\centering
\includegraphics[scale=0.18]{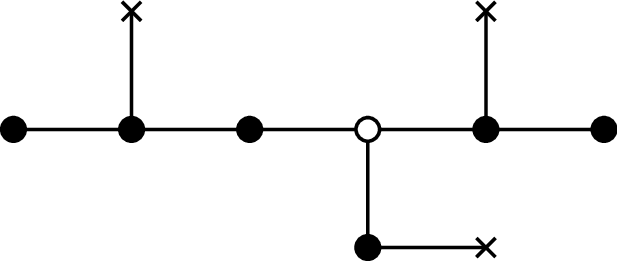}
\caption*{$D_3 \; (\mu\neq 0)$}
\end{subfigure}
\begin{subfigure}{0.35\textwidth}
\centering
\includegraphics[scale=0.18]{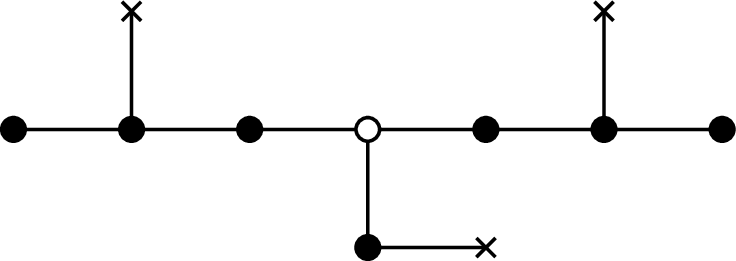}
\caption*{$D_5 \; (\mu\neq 0)$}
\end{subfigure}

\begin{subfigure}{0.25\textwidth}
\centering
\includegraphics[scale=0.18]{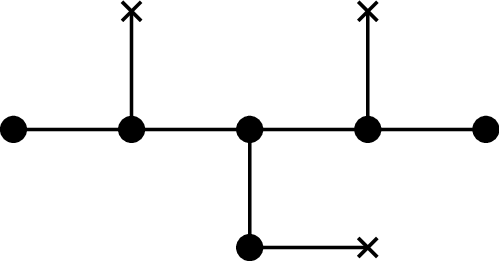}
\caption*{$D_2$}
\end{subfigure}
\begin{subfigure}{0.35\textwidth}
\centering
\includegraphics[scale=0.18]{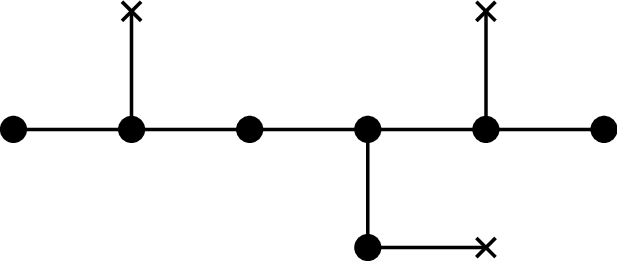}
\caption*{$D_4 \; (\mu\neq 0)$}
\end{subfigure}
\begin{subfigure}{0.35\textwidth}
\centering
\includegraphics[scale=0.18]{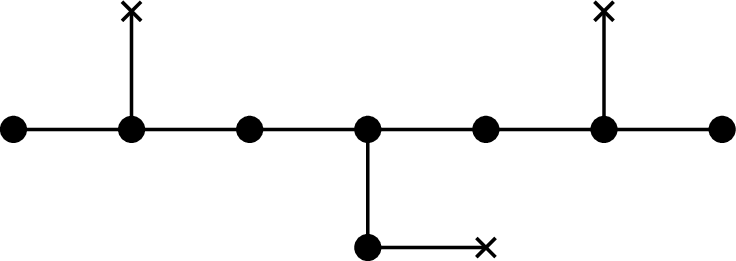}
\caption*{$D_6 \; (\mu\neq 0)$}
\end{subfigure}
\end{figure}

Next, remove all configurations having $\mu \neq 0$, as they are not smoothly contractible.

Finally, observe that some of the remaining configurations are pairwise blow-analytically equivalent (namely, $B_1, B_{11}, C_3, C_7$ and $D_1$ are equivalent to $B_2, B_{12}, C_4, C_8$ and $D_2$ respectively).

Again, we label $\{1, 2, 3\}$ the vertices corresponding to the three non-compact components and consider the values of the invariants $\{\mu(\varGamma_{12}), \mu(\varGamma_{13}), \mu(\varGamma_{23}) \}$ to show that several of the configurations are non-equivalent. In fact, we have the following:
\begin{align*}
B_2, B_{12}, C_4 &: \{1,2,2\} \qquad B_5 :\{0,2,2\} \qquad C_1: \{1,1,2\}\\
 C_8 &: \{0,2,3\} \qquad D_2 : \{1,1,3\}. \qquad \phantom{C_1: \{1,1,2\}}
\end{align*}
To further distinguish between $B_2, B_{12}$ and $C_4$, we look explicitly at the sets $\Delta^*_{ij}$ for $1 \leq i < j \leq 3$:
\begin{align*}
& \Delta^*_{12}(B_2) = \Delta^*_{13}(B_2) = \lbrace \bullet , \bullet , \times \rbrace , \qquad \Delta^*_{23}(B_2) = \lbrace \bullet , \times \rbrace ;\\
& \Delta^*_{12}(B_{12}) = \Delta^*_{13}(B_{12}) = \lbrace \bullet , \bullet , \times \rbrace , \qquad \Delta^*_{23}(B_{12}) = \lbrace \includegraphics[scale=0.14]{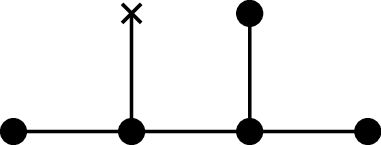} \rbrace; \\
& \Delta^*_{12}(C_4) = \lbrace \bullet , \times \rbrace , \; \Delta^*_{12}(C_4) = \lbrace \bullet , \includegraphics[scale=0.14]{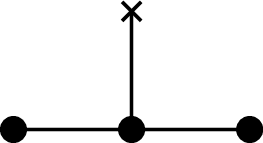} \rbrace , \; \Delta^*_{23}(C_4) = \lbrace \bullet , \bullet , \times \rbrace.
\end{align*}

Since we do not have the bijections implied by Lemma \ref{paths}, we can say that $B_2, B_{12}, C_4$ define different blow-analytic equivalence classes.
\end{proof}

\section{From graphs to germs}

Blow-ups and blow-downs are local transformations, so, in particular, they do not change the order in which the semi-branches intersect the boundary of a small of circle around the origin. We represent this piece of information in a \textit{chord diagram} by drawing vertices on $S^1$ where the semi-branches intersect such boundary, and joining two vertices if they belong to the same local analytic component.

For example, in the tribranched case, there are five possible chord diagrams:
\begin{figure}[H]
\centering
\includegraphics[scale=0.18]{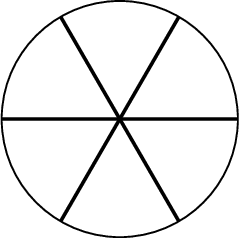}\hspace{5mm}\includegraphics[scale=0.18]{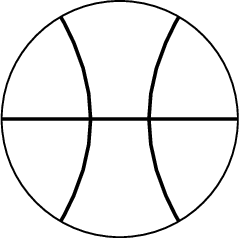}\hspace{5mm}\includegraphics[scale=0.18]{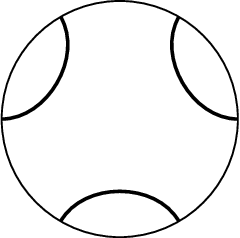}\hspace{5mm}\includegraphics[scale=0.18]{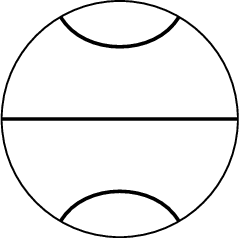}\hspace{5mm}\includegraphics[scale=0.18]{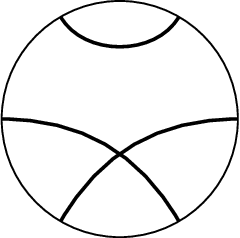}.
\end{figure}

Since chord diagrams are blow-analytic invariants, we can prove that two configurations are non-equivalent by showing that they have different chord diagrams.\\

This invariant does not add new information to the classification of standard forms in Proposition \ref{k1}. In fact, to each configuration corresponds exactly one chord diagram in the following way:
\[
A_2, B_4: \aabccb \qquad B_1, C_2: \aabcbc.
\]

As we consider configurations with a larger value of $\mu'$, however, a new phenomenon appears. For example, the standard form $B_2$ in Proposition \ref{k2}
\begin{figure}[H]
\centering
\includegraphics[scale=0.2]{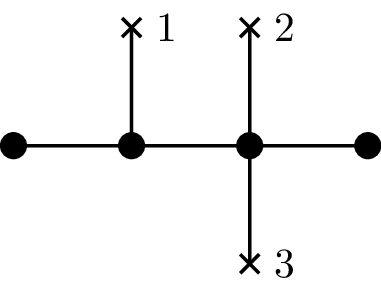}
\end{figure}
\noindent has two possible chords diagrams: \aabbcc and \aabccb. Resolutions corresponding to this dual graph are smooth surfaces diffeomorphic to a chain of four cylinders intersecting transversally, and different choices for the respective positions of the strict transform components give different chord diagrams.

This means that we can have two blow-analytically distinct embedded plane curve germs with the same dual graph, as shown below:

\begin{figure}[H]
\centering
\begin{subfigure}[h]{0.49\textwidth}
\centering
\includegraphics[scale=0.25]{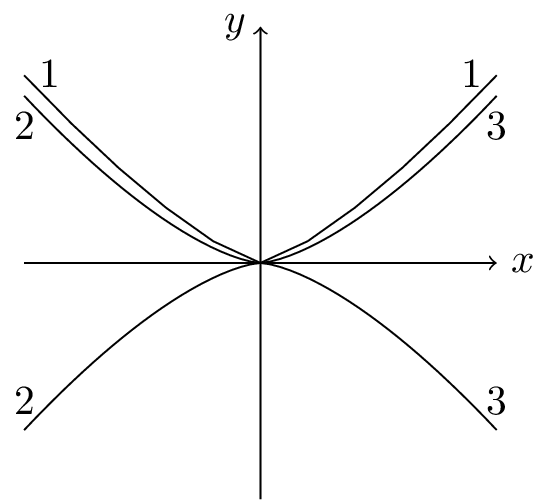}
\caption*{$(\{(y^3-x^4)(y^2+x^3)(y^2-x^3)=0\}, 0)$}
\end{subfigure}
\begin{subfigure}[h]{0.49\textwidth}
\centering
\includegraphics[scale=0.25]{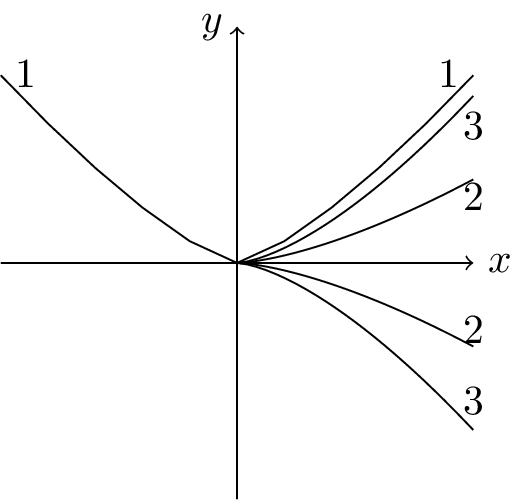}
\caption*{$(\{(y^3-x^4)(2y^2-x^3)(y^2-x^3)=0\}, 0)$}
\end{subfigure}
\end{figure}

The complete list of chord diagrams for the standard forms in Proposition \ref{k2} is as follows:
\begin{align*}
B_2, C_1, C_4, D_2: \aabbcc , \aabccb  \qquad  B_5, C_8: \aabcbc \qquad B_{12}: \aabccb.
\end{align*}

Thus, to each of the configurations $B_2, C_1, C_4, D_2$ correspond pairs of plane curve germs which are blow-analytically non-equivalent.\\

We remark that the induced equivalence of dual graphs is weaker than the blow-analytic equivalence of embedded plane curve germs. This follows from the fact that some topological information is lost in the passage from a resolution to its dual graph, namely, we lose track of the respective position of the strict transform components. One should pay attention to this kind of phenomena when passing from the equivalence of dual graphs $\varGamma$ to the blow-analytic classification of germs.

However, for each standard form, there is only a finite possibility of equivalence classes of germs. In some cases, as above, we can distinguish the classes by using chord diagrams, which are determined solely by the order of the branches near the origin.

\vspace{1cm}

Cristina \textsc{Valle}\\
{Department of Mathematics and Information Sciences,\\ Tokyo Metropolitan University \\
1-1 Minami-Osawa, Hachioji-shi,\\
Tokyo 192-0397, Japan\\
E-mail: \texttt{valle-cristina@ed.tmu.ac.jp}}

\end{document}